\documentclass[11pt,reqno]{amsart}
\usepackage{amsmath,amsthm,amssymb,amsfonts,enumerate,color,enumerate} 
\newcommand{\mysection}[1]{\section{#1}

      \setcounter{equation}{0}}

\newcommand\cbrk{\text{$]$\kern-.15em$]$}} 
\newcommand\opar{\text{\raise.2ex\hbox{${\scriptstyle | }$}\kern-.34em$($} }

\usepackage{color}

\DeclareMathOperator*{\esssup}{ess\,sup}

\newtheorem{theorem}{Theorem}[section]
\newtheorem{lemma}[theorem]{Lemma}
\newtheorem{proposition}[theorem]{Proposition}
\newtheorem{corollary}[theorem]{Corollary}

\theoremstyle{definition}
\newtheorem{assumption}{Assumption}[section]
\newtheorem{definition}{Definition}[section]

\theoremstyle{remark}
\newtheorem{remark}{Remark}[section]

\newcommand\bH{\mathbb{H}}

\newcommand\bL{\mathbb{L}}
\newcommand\bR{\mathbb{R}}
\newcommand\bP{\mathbb{P}}
\newcommand\bB{\mathbb{B}}

\newcommand\bZ{\mathbb{Z}}

\newcommand\frc{\mathfrak{c}}
\newcommand\frf{\mathfrak{f}}

\newcommand\frh{\mathfrak{h}}

\newcommand\fru{\mathfrak{u}}

\newcommand\frF{\mathfrak{F}}

\newcommand\frH{\mathfrak{H}}
\newcommand\frG{\mathfrak{G}}
\newcommand\frm{\mathfrak{m}}
\newcommand\frn{\mathfrak{n}}

\newcommand\cB{\mathcal{B}}
\newcommand\cD{\mathcal{D}}

\newcommand\cF{\mathcal{F}}

\newcommand\cH{\mathcal{H}}

\newcommand\cO{\mathcal{O}}
\newcommand\cR{\mathcal{R}}

\newcommand\cV{\mathcal{V}}
\newcommand\cZ{\mathcal{Z}}
\newcommand\cW{\mathcal{W}}

\makeatletter
 \newcommand{\sumstar}%
 {\operatornamewithlimits{\sum@\kern-.2em\raise1ex\hbox{*}}}
 \makeatother

\newcommand{\F}{\mathcal{F}}
\newcommand{\R}{\mathbb{R}}
\newcommand{\E}{\mathbb{E}}
\renewcommand{\P}{\mathbb{P}}

\def\Xint#1{\mathchoice
{\XXint\displaystyle\textstyle{#1}}%
{\XXint\textstyle\scriptstyle{#1}}%
{\XXint\scriptstyle\scriptscriptstyle{#1}}%
{\XXint\scriptscriptstyle\scriptscriptstyle{#1}}%
\!\int}
\def\XXint#1#2#3{{\setbox0=\hbox{$#1{#2#3}{\int}$ }
\vcenter{\hbox{$#2#3$ }}\kern-.6\wd0}}

\def\dashint{\Xint-}

\usepackage[UKenglish]{babel}
\usepackage{mathabx}
\usepackage{stmaryrd}
\usepackage{latexsym, amssymb}
\usepackage{amsopn}
\usepackage{mathrsfs}
\usepackage{esint}

\begin{document}
 
\title[On Stochastic Navier-Stokes Equations] 
{On uniqueness of solutions to stochastic Navier-Stokes Equations}

\author[R. Cotter]{Raymond Cotter}%
\thanks{}
\address{School of Mathematics and Maxwell Institute,
University of Edinburgh,
King's  Buildings,
Edinburgh, EH9 3JZ, United Kingdom}
\email{cotterraymond30@gmail.com}

\author[I. Gy\"ongy]{Istv\'an Gy\"ongy}
\address{School of Mathematics and Maxwell Institute,
University of Edinburgh,
King's  Buildings,
Edinburgh, EH9 3JZ, United Kingdom}
\email{i.gyongy@ed.ac.uk}

\subjclass[2020] {35Q30, 35R60, 60H15}
\keywords{ Navier-Stokes equations, stochastic Navier-Stokes equations, 
weak solutions, conditional 
uniqueness, Ladyzhenskaya-Prodi-Serrin condition, Morrey spaces}

\begin{abstract}
Theorems on uniqueness and continuous dependence on the initial condition 
of solutions to stochastic Navier-Stokes equations 
driven by Wiener processes and Poisson martingale measures are presented. 
These theorems generalise some results from \cite{GK2026}.   
\end{abstract}

\dedicatory{Dedicated to Professor Bohdan Maslowski on the occasion of his 70th birthday}
\maketitle

\mysection{Introduction}
The Navier--Stokes equations
\begin{equation}
                                                    \label{eq1}
\partial_t u=\nu\Delta u-(u\cdot\nabla)u-\nabla p+D_j\frf^j+f,
\quad
\operatorname{div}u=0,
\end{equation}
posed on a domain $\cR\subseteq\mathbb R^d$ with boundary and 
initial conditions 
\begin{equation} 
                                                                    \label{bin1}
u|_{\partial \cR}=0, \quad u|_{t=0}=u_0,
\end{equation} 
describe the motion of incompressible viscous fluids (when $d=2,3)$).  
Here $u_t(x)=(u^1_t(x),\dots,u^d_t(x))$ 
and $p_t(x)$ are the velocity of the fluid and the pressure at time $t\geq0$ 
at $x\in\cR$, and $\nu>0$ is the viscosity
coefficient. 

Classical results of Leray \cite{L1934} and Hopf \cite{H1951} show that for
every given initial condition $u_0\in\mathcal H$, force fields 
$\frf^j\in L_2([0,\infty],L_2(\cR,\bR^d))$ $(j=1,2,...,d)$ and 
$f\in L_1([0,\infty),L_2(\cR,\bR^d))$ there exists at least one weak
solution $u$ from  the Hopf--Leray class
$$
\mathcal W:=L_{\infty}([0,\infty),\mathcal H)
\cap L_2([0,\infty),\mathcal V),
$$
where
$\mathcal V$ and $\cH$ denote the closure of the space 
$$
\cD=\{\varphi\in C^{\infty}_0(\cR,\bR^d): \,\text{div$\,\varphi=0$}\}
$$
in $W^1_2=W^1_2(\cR,\bR^d)$ and $L_2=L_2(\cR,\bR^d)$, respectively. 
Here and throughout the paper  $C^{\infty}_0(\cR,\bR^d)$ denotes the space of $\R^d$-valued 
smooth functions on $\cR$ with compact support. 

By a theorem of Ladyzhenskaya \cite{L1958} in 1958 
and by Lions and Prodi \cite{LionsProdi1959} in 1959,  
the weak solution in this class is unique when
$d=2$,  whereas the corresponding problem in dimensions 
$d\ge3$ remained open until a recent paper \cite{ABC} by  
Albritton, Bru\'e and Colombo,  
 where the authors show the existence of more than one solution 
to the 3D Navier-Stokes equations with an appropriate force field 
and zero initial condition.  In \cite{HouWangYang2026} 
the authors demonstrate by a computer-assisted 
proof the non-uniqueness of a weak solution 
for the unforced  3D Navier-Stokes equations. 

These developments further motivate the search 
for additional assumptions ensuring uniqueness.
There is an extensive literature 
on such results, often called conditional uniqueness theorems. 
The best-known conditional uniqueness criterion 
is the Ladyzhenskaya-Prodi-Serrin condition, which assumes
\begin{equation}
                                                                                       \label{conditionLPS}
u\in L_q([0,T], L_p(\cR)),
\quad \frac{d}{p}+\frac{2}{q}\le1, \quad p\in(d,\infty),\quad q\in[2,\infty). 
\end{equation}
Under this condition, uniqueness of Hopf--Leray weak solutions was established
by Prodi \cite{Prodi1959} and Serrin \cite{Serrin1963}, uniqueness and regularity
results were obtained by Ladyzhenskaya \cite{L1967}, 
 and Serrin \cite{Serrin1963}. 

In 
\cite{GK2026} the authors generalised the conditional uniqueness 
theorems in Prodi \cite{Prodi1959} and Serrin \cite{Serrin1963}, and extended them 
to stochastic Navier--Stokes equations driven by 
Wiener processes when $\cR=\bR^d$ 
for $d\geq3$. 

Motivated by \cite{GK2026}, our  purpose in this paper 
is to extend these conditional uniqueness results in three directions:

\begin{itemize}
\item to stochastic Navier–Stokes equations driven simultaneously 
by Wiener processes and compensated Poisson random measures;

\item from the whole space to arbitrary Lipschitz domains;

\item from classical Ladyzhenskaya–Prodi–Serrin assumptions to a substantially 
weaker Morrey-type condition.
\end{itemize}
To present a special case of our main result applied to deterministic Navier-Stokes,  
we introduce the following restricted Hopf-Leray class. 

For any $T>0$ and $r\in(2,d]$ ($d\geq3)$ a function $u:[0,T]\times\cR\to\bR^d$ 
from $\cW$ is said to be in the restricted Hopf-Leray class if $u$ can be represented 
as $u^{M}+u^{B}$, such that for a constant $\hat u=\hat u(d,r)$ we have 
$$
\left(\frac{1}{|B|}\int_{B}{\bf1}_{\cR}|u^M_t(x)|^{r}
\,dx\right)^{1/r}\leq \hat u\rho_{B}^{-1} \quad\text{for $dt$-a.e. $t\in[0,T]$}, 
$$
for any ball $B$ of radius $\rho_{B}\leq 1$, and for $\bar u_t=\esssup_{x\in\cR}|u^B_t(x)|$ 
we have 
$$
\int_0^T|\bar u_t|^2\,dt<\infty. 
$$ 

Our main result, Theorem \ref{theorem3} below, 
for the deterministic problem  \eqref{eq1}-\eqref{bin1}
reads as follows. 
\begin{theorem}
                      \label{theorem1}
There exists a constant $N$ depending only on $d$ and $r$ such that 
if $u$ is a solution to \eqref{eq1}-\eqref{bin1} from the restricted Leray-Hopf 
class satisfying 
$$
\hat u\leq \nu/N
$$
then this is the only solution to  \eqref{eq1}-\eqref{bin1} from 
the restricted Leray-Hopf class. 
 \end{theorem}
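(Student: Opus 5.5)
Let $v$ be a second solution from the restricted Hopf--Leray class with the same data, and put $w=u-v\in\cW$. Subtracting the weak formulations, $w$ solves
$$
\partial_t w=\nu\Delta w-(w\cdot\nabla)u-(v\cdot\nabla)w-\nabla \pi,\qquad \operatorname{div}w=0,\quad w|_{t=0}=0 .
$$
The plan is to prove the energy inequality
$$
|w_t|_{L_2}^2+2\nu\int_0^t|\nabla w_s|_{L_2}^2\,ds\le 2\int_0^t\big|((w_s\cdot\nabla)w_s,u_s)\big|\,ds ,
$$
using $((v\cdot\nabla)w,w)=0$ and $((w\cdot\nabla)u,w)=-((w\cdot\nabla)w,u)$. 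Then split $u=u^M+u^B$, absorb the $u^M$ part into the dissipation, control the $u^B$ part by Gronwall, and conclude $w\equiv0$. Since $w$ is only in $\cW$, the identity cannot be tested directly. I would regularise: mollify in time (or use Galerkin/Steklov averages of the test function $w$), or first show that $u$ is regular enough ($u\in L_4(L_4)$-type integrability from the Morrey bound plus $u^B\in L_2(L_\infty)$) so that $(w\cdot\nabla w,u)$ makes sense and the trilinear form is continuous on $\cW$. On a Lipschitz domain, extending $w$ by zero keeps it in $W^1_2(\bR^d)$, so all estimates can be done in $\bR^d$ with ${\bf 1}_\cR$ in the Morrey condition.

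The key step is a Morrey-space multiplier (Fefferman--Phong / Hardy type) inequality: if $\sup_{\rho_B\le1}\rho_B\big(\dashint_B{\bf1}_\cR|u^M|^r\big)^{1/r}\le\hat u$ with $r>2$, then
$$
\int_{\bR^d}|u^M||\phi|^2\,dx\le N(d,r)\,\hat u\,\big(|\nabla\phi|_{L_2}^2+|\phi|_{L_2}^2\big)
\qquad\text{for }\phi\in W^1_2 .
$$
This follows by applying Fefferman's inequality locally, or by covering $\bR^d$ with unit cubes and using Adams' trace inequality for $|u^M|^{\cdot}$ in the Morrey space $L^{r,d-r}$, with $r>2$ needed exactly here. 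The trilinear term carries $\nabla w$, so I would apply it in the form $|((w\cdot\nabla)w,u^M)|\le |\nabla w|_{L_2}\,\big(\int|u^M|^2|w|^2\big)^{1/2}$. For this I need the Morrey inequality for the potential $|u^M|^2$, i.e. $\big(\dashint_B|u^M|^{r}\big)^{2/r}\le \hat u^2\rho_B^{-2}$, which is the Fefferman condition for $V=|u^M|^2\in L^{r/2,\,d-r}$-Morrey with $r/2>1$. This gives $\int|u^M|^2|w|^2\le N\hat u^2(|\nabla w|^2_{L_2}+|w|^2_{L_2})$. I expect this to be the main obstacle: getting the inequality with constant depending only on $(d,r)$, handling the restriction $\rho_B\le1$ (which produces the lower-order $|w|_{L_2}^2$ term), and making the domain cutoff harmless.

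With this in hand,
$$
2|((w\cdot\nabla)w,u^M)|\le 2N^{1/2}\hat u\,|\nabla w|_{L_2}\big(|\nabla w|_{L_2}+|w|_{L_2}\big)\le \nu|\nabla w|^2_{L_2}+C\nu^{-1}\hat u^2|w|_{L_2}^2
$$
once $\hat u\le\nu/N$ with $N$ large. For the bounded part, $2|((w\cdot\nabla)w,u^B)|\le 2\bar u_t|\nabla w|_{L_2}|w|_{L_2}\le \tfrac{\nu}{2}|\nabla w|^2_{L_2}+\tfrac{2}{\nu}\bar u_t^2|w|^2_{L_2}$. Hence
$$
|w_t|^2_{L_2}\le\int_0^t\Big(C\nu^{-1}\hat u^2+2\nu^{-1}\bar u_s^2\Big)|w_s|_{L_2}^2\,ds .
$$
Since $\int_0^T\bar u_t^2\,dt<\infty$, Gronwall yields $w=0$ on $[0,T]$. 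The time-continuity of $|w_t|_{L_2}$ needed for Gronwall comes from the justified energy equality in the regularisation step.
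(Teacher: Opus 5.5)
Your proposal is correct and follows the paper's argument: Theorem~\ref{theorem1} is the deterministic case of Theorem~\ref{theorem3}, whose proof uses the energy equality for $w=u-v$, the reduction $b(u,u,w)-b(v,v,w)=-b(w,w,u)$ via Proposition~\ref{proposition antisymmetry}, Krylov's Morrey multiplier bound $\int_{\cR}|u^M|^2|\varphi|^2\,dx\le N\hat u^{2}\big(|D\varphi|_{L_2}^2+|\varphi|_{L_2}^2\big)$ (Lemma~\ref{lemma k}) to absorb the Morrey part when $\hat u\le\nu/N$, and Gronwall (as an exponential weight) for the $\bar u_t^2$ term. The step you leave as a plan, justifying the energy equality, is handled in the paper by noting that $(u^M\cdot\nabla)u-(v^M\cdot\nabla)v\in L_2([0,T],\cV^*)$ (by the same multiplier bound) and $(u^B\cdot\nabla)u-(v^B\cdot\nabla)v\in L_1([0,T],\cH)$, and then invoking an energy/It\^o formula valid for such a split drift (Theorem~\ref{theorem0}); this is exactly what your time-mollification needs, and it is where the assumption that $v$ also lies in the restricted class is used.
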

We further show (Corollary \ref{corollary1}) 
that the assumption of this theorem is strictly weaker than the classical 
Ladyzhenskaya–Prodi–Serrin condition.

Assume now that $\cR$ is a bounded Lipschitz domain and note that 
by the Poincar\'e inequality 
\begin{equation}
                             \label{Poincare}
|v|_{L_2}\leq \frc|Dv|_{L_2}
\end{equation}
holds for $v\in H^1_0$ with a constant $\frc=\frc(\cR)$, where $H^1_0$ denotes 
the closure of $C^{\infty}_{0}(\cR,\bR^d)$ in $W^1_2(\cR,\bR^d)$. 

The next theorem illustrates our stability result, Theorem \ref{theorem4},
which generalises a result in Serrin \cite{Serrin1963}. 

\begin{theorem}
                             \label{theorem stability}
 There exists a constant $N>0$ depending only on $d$, $r$ and $\frc$ 
 such that if $u$ is a solution to \eqref{eq1}-\eqref{bin1} from the restricted 
 Leray-Hopf class, satisfying 
\begin{equation*}
                                                 \label{stability}
 \tilde u:=\hat u+\esssup_{t\in[0,T]}\bar u_t<\nu/N, 
\end{equation*}
 then we have 
 $$
 |u_t-v_t|_{\cH}\leq |u_0-v_0|_{\cH}e^{-\gamma t}\quad\text{for $t\in[0,T]$}
 $$
 for any solution $v$ to \eqref{eq1}-\eqref{bin1} from the restricted Leray-Hopf 
 class with initial condition $v_t|_{t=0}=v_0\in\cH$, where 
 $\gamma:=\frc^{-2}(\nu-N\tilde u)>0$.  
 \end{theorem}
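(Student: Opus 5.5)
Set $w=u-v$. Both $u$ and $v$ are weak solutions with the same forcing, so $w$ satisfies, in the weak sense on $\cV$,
$$
\partial_t w=\nu\Delta w-(w\cdot\nabla)u-(v\cdot\nabla)w-\nabla(p-q).
$$
The plan is to derive an energy inequality for $|w_t|_{\cH}^2$ and close it with a Grönwall argument, using Poincaré's inequality \eqref{Poincare} to turn dissipation into exponential decay. The rigorous justification of the energy identity for $|w_t|^2_{\cH}$ is the same as in the uniqueness argument behind Theorem \ref{theorem1}, i.e. Theorem \ref{theorem3}. There one mollifies, or uses Galerkin/Steklov averaging, and the trilinear terms make sense because $u$ is in the restricted class. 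I take this from the proof of the uniqueness theorem and concentrate on the estimates.

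Write $b(a,c,e)=\int_\cR (a\cdot\nabla)c\cdot e\,dx$. Then
$$
\tfrac{d}{dt}|w|_{\cH}^2+2\nu|Dw|_{L_2}^2=-2b(w,u,w)-2b(v,w,w).
$$
Since $\operatorname{div}v=0$ and $w=0$ on $\partial\cR$, we have $b(v,w,w)=0$. Also $b(w,u,w)=-b(w,w,u)$. Split $u=u^M+u^B$.

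The $u^B$ part is easy:
$$
|b(w,w,u^B)|\le \bar u_t|w|_{L_2}|Dw|_{L_2}\le \frc\,\bar u_t|Dw|^2_{L_2},
$$
by Poincaré.

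The $u^M$ part is the key step and the main obstacle. We need
$$
|b(w,w,u^M)|\le N_0\hat u\,|Dw|_{L_2}^2,
$$
so it suffices to show $\int|w|^2|u^M|^2\,dx\le N\hat u^2|Dw|^2_{L_2}$. This is a Hardy/Fefferman–Phong type inequality. The local Morrey condition $\rho_B^{-d}\int_B|u^M|^r\le \hat u^r\rho_B^{-r}$ with $r>2$ puts $|u^M|^2$ in the Morrey space $L^{r/2,d/2}$ at scales $\le1$. Fefferman's trace inequality (or the Chiarenza–Frasca inequality) then gives
$$
\int|w|^2|u^M|^2\le N\hat u^2\bigl(|Dw|_{L_2}^2+|w|_{L_2}^2\bigr),
$$
where the $|w|^2_{L_2}$ term comes from the restriction $\rho_B\le1$; we extend $w$ by zero outside $\cR$. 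This estimate is presumably proved earlier in the paper, as part of Theorem \ref{theorem3}, and I would invoke it. If it is not available, I would prove it via the Fefferman–Phong inequality on a covering by unit cubes, using the Sobolev–Morrey embedding. The lower-order term is absorbed using Poincaré, $|w|_{L_2}^2\le\frc^2|Dw|^2_{L_2}$. This is why $N$ depends on $\frc$.

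Combining the two parts,
$$
\tfrac{d}{dt}|w|_{\cH}^2+2\nu|Dw|^2_{L_2}\le 2N_1(\hat u+\bar u_t)|Dw|^2_{L_2}\le 2N\tilde u|Dw|_{L_2}^2,
$$
with $N$ depending on $d,r,\frc$. Since $\nu-N\tilde u>0$, Poincaré gives
$$
\tfrac{d}{dt}|w|_{\cH}^2\le -2(\nu-N\tilde u)\frc^{-2}|w|_{\cH}^2=-2\gamma|w|^2_{\cH}.
$$
Integrating (Grönwall, in the integrated weak form) yields $|w_t|_{\cH}^2\le e^{-2\gamma t}|w_0|^2_{\cH}$, which is the claim.
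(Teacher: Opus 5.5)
Your proposal is correct and follows essentially the same route as the paper's proof, which is given for the more general Theorem~\ref{theorem4}. Both use the energy equality for $w=u-v$, the identity $b(u,u,w)-b(v,v,w)=-b(w,w,u)$, the Morrey/Fefferman-type estimate of Lemma~\ref{lemma k} (via Proposition~\ref{proposition1}) for $u^M$ together with the trivial $L_\infty$ bound for $u^B$, Poincar\'e's inequality to absorb the $|w|_{L_2}$ terms, and Gr\"onwall. One difference is that the paper justifies the energy equality by its It\^o formula, Theorem~\ref{theorem0}, rather than by mollification or Galerkin approximation: there the drift splits into a $V^{\ast}$-part that is square integrable in time and an $H$-part, from the bounded components, that is only integrable in time. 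This step, like $b(v,w,w)=0$, uses that $v$ as well as $u$ lies in the restricted class.
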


 In this paper we study 
 stochastic Navier–Stokes equations driven simultaneously 
 by multiplicative Wiener noise and compensated Poisson random measures, 
 \begin{align}
du=&\big(\nu\Delta u-(u\cdot\nabla) u
+\nabla p+D_j\frf_j +f \big)\,dt                                                                  \nonumber\\
&+\big(\sigma\nabla u-\nabla q+h\big)\circ dw
+\int_{Z}\frh_t(u_t,z)\tilde \pi(dz,dt), \,\,\text{div}\, u=0                                                       \label{NS}
\end{align}
for a random velocity field $u=(u_t^1(x),...,u^d_t(x))$,  
and random pressure fields $p=p_t(x)$ 
and $q=q_t(x)$ for $t\in[0,T]$ and $x\in\cR$, with boundary and initial conditions
\eqref{bin1}
for a random velocity field $u=(u_t^1(x),...,u^d_t(x))$,  
and random pressure fields $p=p_t(x)$ 
where $w$ is a Hilbert space-valued 
Wiener process, $\circ dw_t$ indicates that the corresponding differential 
is understood in the Stratonovich sense, and $\tilde\pi$ is a compensated 
Poisson random measure. 
Such equations arise naturally from stochastic Lagrangian particle dynamics 
and extend the models considered in  \cite{MR2001} and 
\cite{MR2004} and \cite{MR2005} by incorporating jump noise.

When $\cR=\bR^d$ and $\frh=0$, the authors in \cite{MR2005} 
studied the Cauchy problem \eqref{NS}-\eqref{bin1}
with a second order elliptic differential operator 
instead of $\Delta$,  and with force fields 
$D_j\frf_j$, $f$ and $h$, which may depend also on $u$. 
They proved the existence of a solution on some filtered probability 
space $(\Omega,\cF,(\cF_t)_{t\geq0},P)$ 
carrying an $\cF_t$-Wiener process $w$ with values in a Hilbert space. 
Moreover, in $d=2$ they show the existence 
and uniqueness of a unique strong solution.

The problem of existence and uniqueness of solutions of stochastic 
Navier--Stokes equations with random noises with jumps has been 
studied by many authors, see, e.g., \cite {ZBL2019} 
and the references therein. 

We consider for $d\geq3$ the type of stochastic Navier-Stokes equations 
as in \cite{MR2005}, but with the additional last term in \eqref{NS}, see 
equations \eqref{SNS5}--\eqref{div5}--\eqref{bin5}. 
Using the method developed in \cite{KG1980} for rewriting stochastic 
It\^o integrals with respect to martingale measures as stochastic It\^o 
integrals with respect to Hilbert space-valued martingales, we reformulate equations 
\eqref{SNS5}--\eqref{div5}--\eqref{bin5} as the system \eqref{SNS2}--\eqref{bin2}.
We prove our main results, Theorems \ref{theorem2} and \ref{theorem3}, 
for the system \eqref{SNS2}--\eqref{bin2}. As a consequence, 
we obtain the corresponding results for the system 
\eqref{SNS5}--\eqref{div5}--\eqref{bin5},  see Theorem \ref{theorem5}.

Theorem \ref{theorem2} states, in particular, that admissible solutions 
of \eqref{SNS2}--\eqref{bin2} (see Definitions \ref{definition admissible} 
and \ref{definition AS}) admit (strongly) 
c\`adl\`ag $\cH$-valued modifications on the interval $[0,T]$. 
Theorem \ref{theorem3} establishes continuous dependence on the initial 
data and, as a consequence, uniqueness within the class of admissible solutions. 
In the deterministic setting, it immediately yields Theorem \ref{theorem1}.

Furthermore, Theorem \ref{theorem3} yields Corollary \ref{corollary1}, 
which asserts that if $u$ is a solution of the stochastic 
Navier--Stokes equations \eqref{SNS2}--\eqref{bin2} satisfying 
$u\in L_{p,q}$ almost surely for a pair $(p,q)$ fulfilling 
the Ladyzhenskaya--Prodi--Serrin condition \eqref{conditionLPS}, 
then uniqueness holds within the class of admissible solutions. 
This corollary generalises  the classical uniqueness results of Ladyzhenskaya 
\cite{L1967}, Prodi \cite{Prodi1959}, and Serrin \cite{Serrin1963}, 
and extends it to the stochastic Navier--Stokes equations.

In a subsequent paper we present regularity results for admissible solutions 
solutions to  equations \eqref{SNS2}-\eqref{bin2}.

Stochastic Navier--Stokes equations have been studied extensively 
over the past five decades. The pioneering work \cite{BT1973} 
initiated the mathematical analysis of these equations, 
and was followed by a substantial body of work on the existence 
of solutions under various assumptions and in different functional settings; 
see, for example, \cite{AC1990,CC1991,FY1992,BCF1992,CG1994,B1995,DD2002}.

The two-dimensional case is now comparatively well understood. 
In particular, existence and uniqueness results have been established in a variety of frameworks; 
see, for example, \cite{AF2002,MS2002,AF2004,F2003,MR2005,M2009}. 
More recently, stochastic Navier--Stokes equations driven by jump noise 
have also received considerable attention. Local well-posedness 
in $L_p$-spaces for equations with L\'evy noise was obtained in \cite{MS2017}, 
while existence and uniqueness in an  $L_p$-framework for the two-dimensional equations 
with jump noise were proved in \cite{ZBL2019}. 
For the three-dimensional equations, local well-posedness in critical  
$L_p$-spaces with multiplicative cylindrical noise on the 
whole space was established in \cite{KWX2024}.

Several related directions have also been investigated. 
Global existence and uniqueness of mild solutions to the 
three-dimensional random vorticity equations for sufficiently 
small initial vorticity were proved in \cite{BR2017}. 
Ergodic properties of stochastic Navier--Stokes equations 
have been studied extensively, with results in two dimensions obtained in \cite{KS2000,KS2001,WMS2001,WM2001,B2002,MH2006} 
and in three dimensions in \cite{DD2003}. Balance relations 
for randomly forced Navier--Stokes equations on the two-dimensional 
torus were established in \cite{KP2005}. More recently, \cite{AV2024} 
proved well-posedness, regularization, 
and blow-up criteria for stochastic Navier--Stokes equations 
on the $d$-dimensional torus ($d\geq2$), together with a global well-posedness 
result with high probability for sufficiently small initial data in critical spaces.

For a comprehensive account of stochastic fluid dynamics, 
we refer the reader to the monograph \cite{F2008}. 

While the well-posedness theory for stochastic 
Navier--Stokes equations is now well developed in many settings, 
considerably less is known about conditional uniqueness results in three dimensions. 
The present paper contributes to this direction 
by establishing conditional uniqueness for equations 
driven simultaneously by Wiener processes and compensated 
Poisson martingale measures on general Lipschitz domains.

The paper is organised as follows. Section 2 introduces t
he stochastic Navier--Stokes equations driven 
by a `cylindrical martingale', the notion of admissible solutions, 
and the relevant function spaces. It also states our main results, 
Theorems \ref{theorem2} and \ref{theorem3}, 
together with Theorem~\ref{theorem4}, 
which demonstrates how the approach developed 
in the paper can be applied to establish exponential stability of admissible solutions 
to the deterministic Navier--Stokes equations. 
Section 3 presents a ``stochastic energy equality", 
i.e., an It\^o formula for the square of the norm 
of semimartingales with values in a Gelfand triple of Hilbert spaces, 
which plays an important role 
in the proof of Theorems \ref{theorem2}, \ref{theorem3} 
and \ref{theorem4}. 
Section 4 is dedicated to the proof of Theorems \ref{theorem2}, 
\ref{theorem3} and \ref{theorem4}. 
Finally, Section~5 applies Theorems~\ref{theorem2} and \ref{theorem3} 
to stochastic Navier--Stokes equations driven simultaneously 
by Wiener processes and compensated Poisson random measures.

We conclude with introducing some notations. 
We use the notation 
$$
(f,g)=\int_{\cR}f^i(x)g^i(x)\,dx|, \quad |f|_{L_2}=\sqrt{(f,f)} 
$$
for vector fields $f=(f^i)_{i=1}^d$ and $g=(g^i)_{i=1}^d$ on $\cR$, when 
the integrals exist.  
Here and throughout the paper, unless otherwise indicated, 
the summation convention with respect to repeated integer-valued 
indices is in force.  

 Let
$S: L_2(\cR, \bR^d)\to\cH$
denote the projection of $L_2$ onto $\mathcal{H}$, and define
$R:=I-S$, 
where $I$ denotes the identity operator on 
$L_2$. For scalar functions $\varphi$ and vector fields
$u=(u^1,\dots,u^d)$
on $\cR$, we use the notations 
$$
D_i\varphi=\frac{\partial \varphi}{\partial x_i},
\quad
\nabla \varphi=D\varphi=(D_1\varphi,\dots,D_d\varphi), 
\quad \varphi_{(u)}:=(u\cdot\nabla)\varphi=u^jD_j\varphi,
$$
and we write $Du=(D_j u^i)_{i,j=1}^d$ for  the Jacobian matrix of $u$. 

For balls $B$ in $\bR^{d}$ and Lebesgue integrable 
functions $f$ on $B$ 
we use the notation 
$$
\dashint_{B}f\,dx:=\frac{1}{|B|}\int_{B}f\,dx, 
$$        
where $|B|$ denotes the volume of $B$. 
For a Banach space $\bB$, we denote by
$\ell_2(B)$
the space of sequences
$c=(c_n)_{n\in\bZ_0}, \, c_n\in B$ equipped with the norm
$$
|c|_{\ell_2(\bB)}=\left(
\sum_{n\in \bZ}^\infty |c_n|_{\bB}^2\right)^{1/2}, 
$$ 
where $\bZ$ is the set of integers. 
In particular, we write $\ell_2:=\ell_2(\mathbb{R})$. 
For topological spaces $S$ we write $\mathcal{B}(S)$  
for the Borel $\sigma$-algebra on $S$.

Throughout the paper all random variables 
and stochastic processes are defined on a filtered probability space
$(\Omega,\mathcal{F},(\mathcal{F}_t)_{t\ge0},\mathbb{P})$
carrying a sequence
$(m^k)_{k\in\bZ}$
of  c\`adl\`ag square integrable $\mathcal{F}_t$-martingales with 
predictable covariation processes  $<m^i,m^j>_t=t\delta_{i,j}$ for $i,j\in\bZ$, 
a sequence $(w^k)_{k=1}^{\infty}$ of independent 
$\cF_t$-Wiener processes and an $\cF_t$-Poisson martingale measure 
$\tilde\pi(dz,dt)=\pi(dz,dt)-\frn(dz)\otimes dt$, where $\pi(dz,dt)$ is an 
$\cF_t$-Poisson random measure with a $\sigma$-finite characteristic measure 
$\frn(dz)$ 
on a measurable space $(Z,\cZ)$ with a countably generated $\sigma$-algebra $\cZ$.   
If $h=(h_t^k)_{k\in\bZ}$ is an $\ell_2(\bR^d)$-valued optional process such that 
$$
\int_0^T|h_t|^2_{\ell_2(\bR^d)}\,dt<\infty \,\,\text{(a.s.)}, 
$$
then one can the define the stochastic integral 
$$
\int_0^th^k_s\,dm_s^k
=\lim_{n\to\infty}\sum_{k=1}^{n}\int_0^th^k_s\,dm^k_s
+\lim_{n\to\infty}\sum_{-n\leq k\leq 0}\int_0^th^k_s\,dm^k_s, 
$$
where the limits exist in probability,  uniformly in $t\in[0,T]$. 
We assume that $(\cF_t)_{t\geq0}$ is right-continuous 
and $\F_0$ contains all subsets of $\P$-null sets of $\F$. 
We denote by $\cO$ optional 
$\sigma$-algebra on $\Omega\times[0,\infty)$.

\mysection{ Formulation of the Main Theorems}

Let $\cR\subseteq\bR^d$ be Lipschitz domain and $T\in(0,\infty)$.  
We consider the stochastic Navier-Stokes equations                                               
\begin{align}
du_t
&=\Big(D_i\big(a_t^{ij}D_j u_t + \frf_t^i(u_t)\big)
+ f_t(u_t,Du_t)- u_{t(u_t)}-\nabla p_t+ \gamma_t^{ki}D_i q_t^k\Big)\,dt \nonumber\\
&\quad+\Big(\sigma_t^{ik}D_i u_t+ h_t^k(u_t)-\nabla q_t^k\Big)\,dm_t^k,\label{SNS2}
\quad
\operatorname{div}u_t=0
\end{align}
on $\Omega\times[0,T]\times\cR$, 
with boundary and initial 
conditions 
\begin{equation}
                                                         \label{bin2}
u_t|_{\partial \cR}=0, \quad \text{for $t\in[0,T]$\quad 
and
\quad 
$u_t(x)|_{t=0}=u_0(x)$,
\quad$ x\in \cR$},
\end{equation}
for a velocity field
$
u=(u^1_t(x),\dots,u^d_t(x)),
$
and pressure fields
$
p=p_t(x)$ and $
q=(q_t^k(x))_{k\ge1}
$
on $\Omega\times[0,T]\times\cR$. 
We assume that

(i)
the coefficient
$a=(a^{ij})$
is $\mathbb{R}^{d\times d}$-valued,
$\gamma^i=(\gamma^{ik})_{k\in\bZ}$
is an $\ell_2(\mathbb{R}^d)$-valued function, and
$\sigma^i=(\sigma^{ik})_{k\in\bZ}$
is an $\ell_2$-valued 
$\cO\otimes\mathcal B(\mathbb \cR)$-measurable function on
$\Omega\times[0,T]\times\mathbb \cR$ for  $i=1,\dots,d$;

(ii)
the functions
$\frf^i$ and $f$ are $\bR^d$-valued,
$\frf^i$ is a $\cO\otimes\mathcal B(\cR\times\bR^d)$ -measurable 
mapping on $\Omega\times[0,T]\times\cR\times\bR^d$, 
and $f$ is a $\cO\otimes\cB(\cR\times\bR^d\times\bR^{d\times d})$
-measurable mapping on 
$\Omega\times[0,T]\times\cR\times\bR^d\times\bR^{d\times d}$,
for each $i=1,\dots,d$. The function $h=(h^k)_{k\in\bZ}$ is an $\ell_2(\mathbb R^d)$-valued
$\cO\otimes\mathcal B(\cR\times\mathbb R^d)$-measurable mapping on
$\Omega\times[0,T]\times\cR\times\bR^d$. 

\begin{definition}
                                                            \label{hl_def}
A $\mathcal V$-valued optional process
$u=(u_t)_{t\in[0,T]}$
is a solution to \eqref{SNS2}--\eqref{bin2} on $[0,T]$ if
$u\in L_2([0,T],\mathcal V)\cap L_\infty([0,T],\mathcal H)\, \text{(a.s.)}, $
and for each
$\varphi\in\mathcal V$
the identity
\begin{equation} 
                                             \label{hl_sol}
\begin{aligned}
(u_t,\varphi)&=(u_0,\varphi)-
\int_0^t\Big[(a_s^{ij}D_j u_s+\mathfrak{f}_s^i(u_s),D_i \varphi)
+(u_{s(u_s)},\varphi)\Big]\,ds                              \\
&\quad
+\int_0^t\big(\gamma_s^{ki}D_i q_s^k+f_s(u_s,Du_s),\varphi\big)\,ds\\
&\quad
+\int_0^t\big(\sigma_s^{ik}D_i u_s+h_s^k(u_s),\varphi\big)\,dm_s^k
\end{aligned}
\end{equation}
holds for $\mathbb P\otimes dt$-almost every
$(\omega,t)\in\Omega\times[0,T]$, 
and 
\begin{equation}
                                 \label{pressure_assumption}
\nabla q_t^k=R\big(\sigma_t^{ik}D_i u_t+h_t^k(u_t)\big)
\quad
(\bP\otimes dt\otimes dx\text{-a.e.})\quad \text{for $k\in\bZ$}. 
\end{equation}
We call
$(u_t)_{t\in[0,T]}$
an $\mathcal H$-solution if it is an $\mathcal H$-valued 
$\mathcal F_t$-adapted c\`adl\`ag process such that
$u\in L_2([0,T],\mathcal V)\ \text{(a.s.)}$, 
and almost surely \eqref{hl_sol} holds for all
$t\in[0,T],\,\varphi\in\mathcal V$, 
and \eqref{pressure_assumption} is satisfied.
\end{definition} 

\begin{remark}
By Assumptions \ref{assumption1} and  \ref{assumption2} below,  
\eqref{pressure_assumption} implies that, for every solution $u$,
$$
M_t^k(u_t)-\nabla q_t^k=\mathcal S M_t^k(u_t)\in \mathcal H,
\quad
\mathbb P\otimes dt\otimes dx\text{-a.e.},
\quad k\in\bZ_0,
$$
where
\begin{equation}
                           \label{stochastic_integrand}
M_t^k(u):=\sigma_t^{ik}D_i u+h_t^k(u),
\quad
u\in\mathcal V.
\end{equation}
Moreover, by \eqref{pressure_assumption},
\begin{equation}
                         \label{closure_assumption}
\gamma_t^{ki}D_i q_t^k
=\gamma_t^{ki}\big(R M_t^k(u)\big)^i,
\quad k\in\bZ_0,
\end{equation}
where
$\big(R M_t^k(u)\big)^i$
denotes the $i$-th component of $\mathcal R M_t^k(u)$. 
Consequently, the variational formulation \eqref{hl_sol} is closed in $u$.
\end{remark}

\begin{assumption}
                                                            \label{assumption1}
\begin{enumerate}
\item[(i)] There exists a constant $\delta>0$ such that
\begin{equation}
\label{coercivity}
|a_t|
\le
\delta^{-1},
\qquad
\Big(
a_t^{ij}
-
\frac12\sigma_t^{ik}\sigma_t^{jk}
\Big)\lambda^i\lambda^j
\ge
\delta|\lambda|^2,
\end{equation}
for all
$
\lambda\in\mathbb R^d,
\, (\omega,t,x)\in\Omega\times[0,T]\times\cR.
$

\item[(ii)]
There exists a nonnegative optional process
$
\vartheta=(\vartheta_t)_{t\in[0,T]}
$
such that almost surely
\begin{equation}
                                                    \label{g_uperbound}
\int_0^T \vartheta_t^2\,dt<\infty\quad \text{ and } \quad |\gamma_t(x)|^2
:=
\sum_{i=1}^d\sum_{k\in\bZ_0}
|\gamma_t^{ki}(x)|^2
\leq \vartheta_t^2 
\end{equation}
 for all
$
(t,x)\in[0,T]\times\cR.
$
\end{enumerate}
\end{assumption}

\begin{remark}
                                           \label{remark sigma}
By \eqref{coercivity},
$|\sigma_t|\leq N(d,\delta)$. 
\end{remark}

\begin{assumption} 
                                                   \label{assumption2}
There exist nonnegative optional processes
$\lambda,\kappa,\chi$, 
and nonnegative $\cO\otimes\mathcal B(\cR)$-measurable functions
$\mathfrak{F}$, $\mathfrak{G}$
on $\Omega\times[0,\infty]\times\cR$ such that
\begin{equation*}
\int_0^T(\lambda_t^2+\chi_t^2+\kappa_t)\,dt<\infty,
\quad
\int_0^T\big(|\mathfrak{F}|_{L_2}^2+|\mathfrak{G}|_{L_2}\big)\,dt<\infty
\quad \text{(a.s.),}
\end{equation*}
and 
\begin{enumerate}
\item[(i)]
for all
$\omega\in\Omega$, $t\geq0$, $x\in\cR$, $u\in\mathbb R^d$, 
$z\in\mathbb R^{d\times d}$,
\begin{equation}
                                            \label{f_upperbound}
|\mathfrak{f}_t(x,u)|+|h_t(x,u)|
\leq
\lambda_t|u|+\mathfrak{F}_t(x),
\end{equation}
$$
|f_t(x,u,z)|\leq \kappa_t|u|+\chi_t|z|+\mathfrak{G}_t(x).
$$
\item[(ii)]
for all $\omega\in\Omega$, $t\geq0$, 
$x\in\cR$, $u_1, u_2\in\bR^d$ and  
$z_1, z_2\in\bR^{d\times d}$,
\begin{equation}
                                                  \label{lipschitz1}
|\mathfrak{f}_t(x,u_1)-\mathfrak{f}_t(x,u_2)|
+
|h_t(x,u_1)-h_t(x,u_2)|
\le
\lambda_t|u_1-u_2|,
\end{equation}
\begin{equation}
                                                   \label{lipschitz2}
|f_t(x,u_1,z_1)-f_t(x,u_2,z_2)|
\le
\kappa_t|u_1-u_2|
+
\chi_t|z_1-z_2|.
\end{equation}
\end{enumerate}
\end{assumption}

\begin{assumption}
                                         \label{assumption3}
The initial condition
$u_0$
is an $\mathcal F_0$-measurable $\mathcal H$-valued random variable.
\end{assumption}

Assume $d\geq3$ and let $r\in(2,d]$ 
be a constant. Let
$\mathbb B_\rho$ 
denote the family of balls in $\mathbb R^d$ of radius $\rho$.

\begin{definition}
                                                                 \label{definition admissible}
A real-, vector-, or tensor-valued function
on $\Omega\times[0,T]\times\mathbb \cR$
is called admissible if
$f=f^M+f^B$  
for some $\cO\otimes\cB(\mathbb\cR)$-measurable 
functions $f^M$, $f^B$, and 
a constant $\hat f>0$ and a nonnegative optional process
$\bar f=(\bar f_t)_{t\in[0,T]}$ exist such that 
\begin{equation}
                                              \label{admisdable_MN_upperbound}
\left(
\dashint_{B_\rho}{\bf1}_{\cR}|f^M(t,x)|^r\,dx
\right)^{1/r}
\le
\hat f\,\rho^{-1} \quad \text{for $\mathbb{P}\otimes dt$-a.e 
$(\omega,t)\in\Omega\times[0,T]$}
\end{equation}
for all $B_\rho\in\mathbb B_\rho$, $\rho\leq1$, 
and
\begin{equation}
                                               \label{admissible_essup_upperbound}
\operatorname*{ess\,sup}_{x\in\cR}
|f_t^B(x)|
\leq \bar f_t \,\, \text{(a.s.)},
\quad
\sup_{\omega\in\Omega}
\int_0^T \bar f_t^2\,dt<\infty.
\end{equation}
\end{definition}
The components $f^M$ and $f^B$ are referred to as {\it Morrey} and 
{\it bounded} components of the admissible function $f$. 
\begin{definition}
For$r\in[1,\infty)$, $\lambda\ge0$, we denote by
$E_{r,\lambda}$ the Morrey space of Euclidean-valued functions $f$ 
on $\bR^d$ satisfying
$$
|f|_{r,\lambda}:=\sup_{\rho\in(0,1],\,B\in\mathbb B_\rho}
\rho^\lambda\left(\dashint_{B} {\bf1}_{\cR}|f(x)|^r\,dx\right)^{1/r}<\infty.
$$
\end{definition}
Thus \eqref{admisdable_MN_upperbound} is equivalent to
$
|f^M|_{r,1}\le \hat f 
$ $( \mathbb{P}\otimes dt \text{-}a.e.)$.

\begin{definition}
                                                                          \label{definition AS} 
An $\mathbb R^d$-valued function $u$ 
on $\Omega\times[0,T]\times\cR$ is called an admissible solution to
\eqref{SNS2}--\eqref{bin2} if $u$ is an admissible function and
$(u_t)_{t\in[0,T]}$
is a solution in the sense of Definition~\ref{hl_def}.
\end{definition}

\begin{theorem}
                                 \label{theorem2}
Let Assumptions \ref{assumption1}, \ref{assumption2}(i) 
and \ref{assumption3} hold, and let $u$ be an admissible solution to
\eqref{SNS2}--\eqref{bin2}. Then $u$ admits a
$\bP\otimes dt$-modification, denoted again by $u$, 
which is an $\mathcal H$-solution. Moreover,                                        
\begin{align}
&\E\sup_{t\le T}|e^{-\varphi_t}u_t|_{\mathcal H}^2
+\E\int_0^T\alpha_t|e^{-\varphi_t}u_t|_{\mathcal H}^2\,dt
+\E\int_0^T|e^{-\varphi_t}u_t|_{\mathcal V}^2\,dt.  \nonumber\\
&\qquad\leq
N \E|u_0|_{\mathcal H}^2
+
N \E\int_0^T
|e^{-\varphi_t}\frF_t|_{ L_2}^2\,dt
+
N \E\left(\int_0^T|e^{-\varphi_t}\frG_t|_{L_2}\,dt\right)^2, \nonumber
\end{align}
where
$N=N(d,\delta)$ is a constant, 
and
$$
\varphi_t
=
\int_0^t \alpha_s\,ds 
\quad
\varepsilon <\alpha_s
=N'\big(\lambda_s^2+\chi_s^2+\vartheta_s^2+\kappa_s\big)
+\mu_s,
$$
with constants
$N'=N'(d,\delta)>0$,  $\varepsilon >0$
and any nonnegative optional process 
$\mu$ satisfying
$$
\int_0^T \mu_t\,dt<\infty
\quad\text{(a.s.).}
$$
\end{theorem}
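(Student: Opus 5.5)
We will rewrite \eqref{hl_sol} as an equation in the Gelfand triple $\cV\hookrightarrow\cH\hookrightarrow\cV^*$ and apply the stochastic energy equality of Section 3, that is, the It\^o formula for $|u_t|^2_{\cH}$ of semimartingales in a Gelfand triple. Concretely, we will write $du_t=A_t\,dt+SM^k_t(u_t)\,dm^k_t$, where $A_t\in\cV^*$ is defined by
\[
\langle A_t,\varphi\rangle=-(a^{ij}_tD_ju_t+\frf^i_t(u_t),D_i\varphi)-(u_{t(u_t)},\varphi)+(\gamma^{ki}_t(RM^k_t(u_t))^i+f_t(u_t,Du_t),\varphi).
\]
We use \eqref{closure_assumption} to eliminate $q$, and \eqref{pressure_assumption} together with $S\varphi=\varphi$ for $\varphi\in\cV$ to replace $M^k$ by $SM^k$ in the martingale term.

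The first hypothesis of the Section 3 result is $\int_0^T|A_t|_{\cV^*}^2\,dt<\infty$ a.s. The linear terms are handled by Assumptions \ref{assumption1}--\ref{assumption2}(i), the bound $|\sigma|\le N$, and $u\in L_\infty(\cH)\cap L_2(\cV)$. The term $f$ contributes $\int|\frG|_{L_2}dt<\infty$, which is only $L_1(\cV^*)$ in time; we will therefore check that Section 3 allows an $L_1([0,T],\cH)$ part of the drift, or else split it off as such.

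The main obstacle is the nonlinear term $(u_{(u)},\varphi)=-(u^ju^i,D_j\varphi)$. For general Hopf--Leray $u$ with $d\ge3$ it is not in $L_2(\cV^*)$, and this is exactly where admissibility enters. Writing $u=u^M+u^B$, we will need $|u^ju|_{L_2}\le |u^Mu|_{L_2}+|u^Bu|_{L_2}$. The bounded part gives $|u^Bu|_{L_2}\le\bar u_t|u|_{L_2}$, and this is in $L_2(dt)$ because $\int\bar u^2<\infty$ and $u\in L_\infty(\cH)$. For the Morrey part we will invoke a Fefferman--Phong/Morrey-type multiplier inequality: for $r>2$,
\[
\int|f^M|^2|v|^2dx\le N(d,r)|f^M|_{r,1}^2\big(|Dv|_{L_2}^2+|v|_{L_2}^2\big),\qquad v\in H^1_0,
\]
where the lower-order term comes from restricting to $\rho\le1$. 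We will prove this by covering $\bR^d$ with unit cubes, extending $v$ by zero, and applying the classical Fefferman--Phong/Chiarenza--Frasca inequality locally. This yields $|u^Mu|_{L_2}\le N\hat u|u|_{W^1_2}$, which lies in $L_2(dt)$. Hence $A\in L_2(\cV^*)+L_1(\cH)$ and $SM(u)\in L_2(\ell_2(\cH))$. The Section 3 theorem then provides the c\`adl\`ag $\cH$-valued modification, which is the $\cH$-solution, together with the It\^o formula for $|u_t|^2_{\cH}$.

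For the estimate, we apply the It\^o formula to $e^{-2\varphi_t}|u_t|_\cH^2$. The convective term vanishes: $(u_{(u)},u)=0$ for $u\in\cV$, where the integrability needed to justify this comes from the multiplier bound above and density of $\cD$. The It\^o correction $|SM^k(u)|^2\le|M^k(u)|^2$ combines with $-2(a^{ij}D_ju,D_iu)$, via coercivity \eqref{coercivity}, to give $-2\delta|Du|^2$ plus cross terms $2(\sigma^{ik}D_iu,h^k)+|h|^2$. The $\gamma$-term gives $2\vartheta_t|RM(u)||u|\le \varepsilon'|Du|^2+N\vartheta^2|u|^2+\dots$. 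All remaining terms are absorbed by Young's inequality into $\delta|Du|^2$, $N'(\lambda^2+\chi^2+\vartheta^2+\kappa)|u|^2$, $|\frF|^2$, and $|\frG|_{L_2}|u|_{L_2}$. The factor $e^{-2\varphi}$ yields the $-2\alpha|e^{-\varphi}u|^2$ term on the left, and $\mu$ contributes only a further favourable term.

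We then localise with stopping times $\tau_n$ so that the stochastic integral is a martingale and all quantities are bounded. The term $\E\sup_t$ is controlled by the Burkholder--Davis--Gundy inequality:
\[
\E\sup_t\Big|\int e^{-2\varphi}(u,SM^k)\,dm^k\Big|\le \tfrac14\E\sup_t|e^{-\varphi}u|^2+N\E\int e^{-2\varphi}|M(u)|^2dt.
\]
The $\frG$ term is handled by $\int|e^{-\varphi}\frG||e^{-\varphi}u|\le\sup_t|e^{-\varphi}u|\int|e^{-\varphi}\frG|$, followed by Young's inequality. Finally, Fatou's lemma as $n\to\infty$ gives the stated bound.
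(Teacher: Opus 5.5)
Your proposal is correct and follows essentially the paper's route. You use the Gelfand-triple It\^o formula of Section 3 with drift in $L_2(\cV^*)+L_1(\cH)$, Krylov's Morrey multiplier lemma to control the convective term, and antisymmetry to kill $(u_{(u)},u)$, followed by coercivity plus Young, localisation, Davis' inequality and Fatou. The only real variation is that you integrate by parts and place the whole convective term in $L_2(\cV^*)$ via $u\otimes u$, whereas the paper puts $u^M\cdot\nabla u$ in $\cV^*$ and $u^B\cdot\nabla u$ in the $L_1(\cH)$ part.

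One bookkeeping correction: not only $\frG$ but the whole $f$-term and the $\gamma^{ki}_t(RM^k_t(u_t))^i$ term must go into the $L_1(\cH)$ part, as the paper's $F_t$ does. This is because $\kappa$ is only integrable and $\chi,\vartheta$ only square integrable in time, so $\chi_t|Du_t|_{L_2}$ and $\vartheta_t|Du_t|_{L_2}$ are merely in $L_1(dt)$, not $L_2(dt)$.
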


\begin{theorem}
                                            \label{theorem3}
Let Assumptions \ref{assumption1}, 
\ref{assumption2} and \ref{assumption3} hold.  
Then there exists a constant 
$N=N(d,r)>0$
such that if $u$ is an admissible solution 
to \eqref{SNS2}--\eqref{bin2} satisfying 
$u\leq \delta/N$, then 
$$
\E e^{-\phi_{t}}|u_t-v_t|^2_{\cH}
\leq \E |u_0-v_0|_{\cH}^2\quad\text{for $t\in[0,T]$} 
$$
for any admissible solution $v$ to \eqref{SNS2}--\eqref{bin2} 
on $[0,T]$ with an $\cH$-valued initial condition 
$v_t|_{t=0}=v_0$, where 
$$
\phi_t=N\int_0^t\beta_s\,ds,
\quad 
\beta_s=\hat u^2+\bar u_s^2+\lambda^2_s+\vartheta^2+\chi^2_s+\kappa_s
$$
and $N=N(d,r,\delta)$ is a constant. In particular, if $u_0=v_0$ 
then almost surely $u_t=v_t$ for all $t\in[0,T]$ for the $H$-valued c\`adl\`ag  
version of the solutions. 
\end{theorem}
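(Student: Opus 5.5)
By Theorem \ref{theorem2}, both $u$ and $v$ have c\`adl\`ag $\cH$-valued modifications which are $\cH$-solutions. We work with these modifications throughout. Set $w:=u-v$. Subtracting the two identities \eqref{hl_sol} shows that $w$ is a $\cV$-valued semimartingale in the Gelfand triple $\cV\subset\cH\subset\cV^*$. Its drift contains:
\begin{itemize}
\item the linear terms $D_i(a^{ij}D_jw)$;
\item the differences $D_i(\frf^i(u)-\frf^i(v))$ and $f(u,Du)-f(v,Dv)$;
\item the pressure term $\gamma^{ki}(RM^k(u)-RM^k(v))^i$, via \eqref{closure_assumption};
\item the nonlinear difference $-(u_{(u)}-v_{(v)})$.
\end{itemize}
Its martingale part is $S(M^k(u)-M^k(v))\,dm^k$.

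I would apply the stochastic energy equality of Section 3 (the It\^o formula for $|\cdot|^2_\cH$) to $e^{-\phi_t}|w_t|^2_\cH$. The quadratic variation contributes $\sum_k|S(\sigma^{ik}D_iw+h^k(u)-h^k(v))|^2_{L_2}\le\sum_k|\sigma^{ik}D_iw|^2_{L_2}+\text{cross terms}$. Since $S$ is a contraction, the coercivity condition \eqref{coercivity} then yields the dissipation $-2\delta|Dw|^2_{L_2}$ after absorbing $\frac12\sigma\sigma$. The terms governed by $\lambda,\chi,\kappa,\vartheta$ are estimated using \eqref{lipschitz1}, \eqref{lipschitz2}, \eqref{g_uperbound}, $|R|\le1$ and Young's inequality. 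This gives $\epsilon|Dw|^2_{L_2}+N(\lambda^2+\chi^2+\vartheta^2+\kappa)|w|^2_{L_2}$.

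The key step is the trilinear term. Using $\operatorname{div}v=0$ and $w|_{\partial\cR}=0$, we have $(v_{(v)}-u_{(u)},w)=-(w_{(w)},u)$ up to sign, where the term $(v_{(w)},w)$ vanishes. Hence it suffices to bound $|(w^jD_jw,u)|\le\int|u||w||Dw|\,dx$. Split $u=u^M+u^B$. The bounded part gives $\bar u_t|w|_{L_2}|Dw|_{L_2}\le\epsilon|Dw|^2+N\bar u_t^2|w|^2$. The Morrey part is the main obstacle. I would use the Fefferman--Phong / Chiarenza--Frasca type trace inequality
\[
\int_{\bR^d}|V||g|^2\,dx\le N(d,r)\,|V|_{r,1}^2\big(|Dg|^2_{L_2}+|g|^2_{L_2}\big)
\]
for $V$ in the localized Morrey space $E_{r,1}$, $r>1$ (here with $|V|^{1/2}$ replaced suitably). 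Concretely, I would first apply Cauchy--Schwarz, $\int|u^M||w||Dw|\le(\int|u^M|^2|w|^2)^{1/2}|Dw|_{L_2}$, and then apply the Fefferman inequality with potential $V=|u^M|^2$, which lies in $E_{r/2,2}$ with norm $\hat u^2$; this needs $r/2>1$, i.e. $r>2$. The localisation to $\rho\le1$ is handled by a partition of unity into unit cubes, which produces the lower-order $|w|^2_{L_2}$ term. Here $w$ is extended by zero outside $\cR$, which is allowed since $w\in H^1_0$ and explains the factor ${\bf 1}_\cR$. The result is
\[
\int|u^M||w||Dw|\le N\hat u\big(|Dw|^2_{L_2}+|w|_{L_2}|Dw|_{L_2}\big)\le(N\hat u+\epsilon)|Dw|^2+N\hat u^2|w|^2 .
\]
With $\hat u\le\delta/N$, the gradient part is absorbed into the $\delta$-dissipation. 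I read the hypothesis ``$u\le\delta/N$'' as $\hat u\le\delta/N$.

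Collecting terms, with $\phi$ chosen as in the statement, we get
\[
d\big(e^{-\phi_t}|w_t|^2_\cH\big)\le -c\,e^{-\phi_t}|Dw_t|^2\,dt+dL_t
\]
with $L$ a local martingale. Localising with stopping times $\tau_n$ (so that $\int_0^{\tau_n}|Dw|^2$ and $\int_0^{\tau_n}\beta$ are bounded) turns $L^{\tau_n}$ into a martingale. Taking expectations gives $\E e^{-\phi_{t\wedge\tau_n}}|w_{t\wedge\tau_n}|^2\le\E|w_0|^2$, and Fatou's lemma as $n\to\infty$ yields the estimate. The square integrability of $\bar u$ from \eqref{admissible_essup_upperbound} guarantees $\phi_t<\infty$ a.s. If $u_0=v_0$, then $e^{-\phi_t}|w_t|^2=0$ a.s. for each $t$, so $w_t=0$ a.s. for each $t$. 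Taking rational $t$ and using c\`adl\`ag paths then gives $u=v$ for all $t$ almost surely.
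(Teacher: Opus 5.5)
Your proposal is correct and follows essentially the paper's route: the energy equality of Theorem~\ref{theorem0} for $w=u-v$, the reduction $b(u,u,w)-b(v,v,w)=b(v,w,w)+b(w,u,w)=-b(w,w,u)$, the Morrey multiplier bound of Lemma~\ref{lemma k} (exactly your localised Fefferman-type inequality for the potential $|u^M|^2$), coercivity to absorb $\tfrac12\sigma\sigma$, Young's inequality for the remaining terms, and localisation plus Fatou's lemma. One notational slip: with the paper's convention $\varphi_{(u)}=u^jD_j\varphi$, the term that vanishes is $(w_{(v)},w)=b(v,w,w)$, not $(v_{(w)},w)=b(w,v,w)$; this matters because the correct split leaves $u$, whose Morrey part is small, rather than $v$ in the surviving term $(w_{(w)},u)$, which is the reduction you in fact use.
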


Finally we present a theorem on exponential stability of admissible 
solutions to deterministic Navier-Stokes equations, i.e., when the functions 
$a$, $\frf$, $f$ and the initial condition in equations \eqref{SNS2}--\eqref{bin2} 
do not depend on $\omega$, and 
$\sigma=0$, $q=0$ and $h=0$. 

This means we consider the equations 
\begin{align}
du_t=&D_i\big(a_t^{ij}D_j u_t + \frf_t^i(u_t)\big)\,dt \nonumber\\
&+ \big(f_t(u_t,Du_t)- u_{t(u_t)}-\nabla p_t\big)\,dt, 
 \quad
\operatorname{div}u_t=0                            \label{NS3}
\end{align}
on $[0,T]\times\cR$, 
with boundary and initial 
conditions 
\begin{equation}
                                                         \label{bin3}
u_t|_{\partial \cR}=0, \quad \text{for $t\in[0,T]$\quad 
and
\quad 
$u_t(x)|_{t=0}=u_0(x)$,
\quad$ x\in\cR$},
\end{equation}
where $a=(a_t^{ij}(x))$ is an $\mathbb{R}^{d\times d}$-valued Borel 
function on $[0,\infty)\times\bR^d$, the functions 
$\frf^i=\frf^i_t(x,u)$ and $f=f_t(x,u,z)$ are $\bR^d$-valued Borel functions on 
$[0,\infty)\times\cR\times\bR^d$ and 
$[0,\infty)\times\cR\times\bR^d\times\bR^{d\times d}$, respectively 
for each $i=1,\dots,d$, and $u_0$ is a Borel function on $\cR$ with values 
in $\bR^d$.  

\smallskip

Let $\cR$ be a bounded Lipschitz domain and 
make the following assumptions. 

\begin{assumption}
                                                            \label{assumption1a}
There exists a constant $\delta>0$ such that
$$
|a_t(x)|\leq\delta^{-1},
\quad
a_t(x)^{ij}\lambda^i\lambda^j\geq\delta|\lambda|^2
$$
for all $(t,x)\in [0,T]\times\cR$ and $\lambda\in\bR^d$.
\end{assumption}

\begin{assumption} 
                                                   \label{assumption2a}
There exist a constant $L$ and Borel functions
$\mathfrak{F}$, $\mathfrak{G}$ on $[0,T]\times\cR$ such that
\begin{equation*}
                                              \label{integrability}
\int_0^T\big(|\mathfrak{F}|_{L_2}^2+|\mathfrak{G}|_{L_2}\big)\,dt<\infty, 
\end{equation*}
\begin{equation*}
|\frf_t(x,u)| \leq L|u|+\frF_t(x),
\quad |f_t(x,u,z)|\leq L(|u|+|z|)+\frG_t(x)
\end{equation*}
for all $t\in[0,T]$, $x\in\cR,\,u\in\mathbb R^d$, 
$z\in\mathbb R^{d\times d}$, and 
\begin{equation}
                                                            \label{lipschitz1a}
|\mathfrak{f}_t(x,u_1)-\mathfrak{f}_t(x,u_2)|
\leq L|u_1-u_2|,
\end{equation}
\begin{equation}
                                                                \label{lipschitz2a}
|f_t(x,u_1,z_1)-f_t(x,u_2,z_2)|
\leq
L(|u_1-u_2|+|z_1-z_2|)
\end{equation}
for all
$t\in[0,T]$, $x\in\cR$, $u_1, u_2\in\bR^d$ 
and  $z_1, z_2\in\mathbb R^{d\times d}$. 
\end{assumption}

\begin{assumption}
                                         \label{assumption3a}
We have $u_0\in \cH$. 
\end{assumption}
 
 Now we are in the position to present 
 our theorem on exponential stability, 
 where $\frc$ is the constant in the Poincar\'e inequality \eqref{Poincare}.  
\begin{theorem}
          \label{theorem4}
Let Assumptions \ref{assumption1a}, \ref{assumption2a} 
and \ref{assumption3a} hold. Then there is a constant $N=N(d,r,\kappa)$ 
such that if $u$ is an admissible solution to \eqref{NS3}--\eqref{bin3} 
satisfying 
$$
\tilde u+L<\delta/N,\quad \tilde u=\hat u+\esssup_{t\in[0,T]} \bar u_t,  
$$
then we have 
$$
\int_0^T|u_t-v_t|^2_{\cV}\leq \tfrac{1}{2\gamma}|u_0-v_0|^2_{\cH}
\quad\text{with} \quad \gamma:=\delta-N(\tilde u+L), 
$$
and 
$$
|u_t-v_t|_{\cH}\leq |u_0-v_0|_{\cH}e^{-\frc^{-2}\gamma t}
\quad 
\text{for $t\in[0,T]$}
$$
for any admissible solution $v$ to \eqref{NS} with zero boundary condition 
and initial condition $v_t|_{t=0}=v_0\in\cH$. 
\end{theorem}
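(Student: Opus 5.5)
The plan is to work with the difference $w_t:=u_t-v_t$ and derive an energy inequality for $|w_t|_{\cH}^2$ that is strong enough to give both the $L_2(\cV)$ bound and the exponential decay. Since the problem is deterministic, Theorem \ref{theorem2} (with $\sigma=0$, $h=0$, $m$ absent) provides continuous $\cH$-valued versions of $u$ and $v$. The energy equality of Section 3 (the Itô formula in the Gelfand triple $\cV\subset\cH\subset\cV^*$) then applies to $w$. Two facts make this legitimate: $u,v\in L_2([0,T],\cV)$, and the nonlinear term $u_{(u)}-v_{(v)}$ lies in $L_1([0,T],\cV^*)$ for admissible functions. The latter will be checked with the Morrey estimate below, which bounds $(g_{(f)},\varphi)$ by $N\hat f|g|_{\cV}|\varphi|_{\cV}+\bar f|g|_{\cV}|\varphi|_{\cH}$.

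The energy identity reads
$$|w_t|_{\cH}^2+2\int_0^t(a^{ij}D_jw,D_iw)\,ds=|w_0|^2_{\cH}+2\int_0^t\big[-(\frf^i(u)-\frf^i(v),D_iw)+(f(u,Du)-f(v,Dv),w)-(u_{(u)}-v_{(v)},w)\big]ds.$$
By Assumption \ref{assumption1a} the left integrand is at least $2\delta|Dw|^2_{L_2}$. By \eqref{lipschitz1a}, \eqref{lipschitz2a} and Poincaré \eqref{Poincare}, the $\frf$ and $f$ terms are bounded by $NL|Dw|^2_{L_2}$ with $N=N(\frc)$. For the nonlinearity, write $u_{(u)}-v_{(v)}=w_{(u)}+v_{(w)}$ and use $\operatorname{div}v=0$ with the zero boundary values, so that $(v_{(w)},w)=0$. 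It remains to estimate $(w_{(u)},w)=-(w_{(w)},u)$. Split $u=u^M+u^B$. The bounded part gives
$$|(w_{(w)},u^B)|\le\bar u_t|w|_{L_2}|Dw|_{L_2}\le\frc\,\bar u_t|Dw|^2_{L_2}.$$

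The key step, and the main obstacle, is the Morrey part. It needs a Fefferman--Phong/Morrey-type multiplier inequality
$$\int_{\cR}|u^M|^2|w|^2\,dx\le N(d,r)\,|u^M|^2_{r,1}\int_{\cR}|Dw|^2\,dx$$
for $w\in H^1_0$ extended by zero. The restriction $\rho\le1$ in $|\cdot|_{r,1}$ means this holds only with an extra lower-order term $N|u^M|^2_{r,1}|w|^2_{L_2}$. I would first try to prove this by a localization to a unit-ball cover with a partition of unity, applying the Fefferman--Phong inequality (for $r>2$) on each piece. That lower-order term is then absorbed by Poincaré, at the cost of a constant depending on $\frc$. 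Cauchy--Schwarz then gives $|(w_{(w)},u^M)|\le N\hat u|Dw|^2_{L_2}$.

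Collecting the estimates,
$$|w_t|^2_{\cH}+2\big(\delta-N(\tilde u+L)\big)\int_0^t|Dw_s|^2_{L_2}\,ds\le|w_0|^2_{\cH}.$$
Here $|w|_{\cV}$ is equivalent to $|Dw|_{L_2}$ by Poincaré, and the constant is arranged so that $2\gamma$ multiplies the $\cV$-norm integral. This gives the first claim. For decay, the same computation in differential form on $[s,t]$ gives $\frac{d}{dt}|w_t|^2_{\cH}\le-2\gamma|Dw_t|^2_{L_2}\le-2\frc^{-2}\gamma|w_t|^2_{\cH}$ in the distributional sense, since $|w_t|^2_{\cH}$ is absolutely continuous. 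Gronwall's inequality then yields $|w_t|_{\cH}\le|w_0|_{\cH}e^{-\frc^{-2}\gamma t}$.
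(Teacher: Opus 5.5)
Your argument is correct and is essentially the paper's proof. It uses the energy equality for $w=u-v$ from Theorem~\ref{theorem0} and reduces the nonlinear term to $-b(w,w,u)$ as in \eqref{identity}. It controls $u^M$ with the Morrey multiplier bound of Lemma~\ref{lemma k}, which you would reprove by localisation and Fefferman--Phong while the paper simply quotes it. It controls $u^B$ with the sup bound, and finishes with Poincar\'e, ellipticity and Gronwall.

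There are three minor points. First, your $w_{(u)}$ and $v_{(w)}$ must be read as $(w\cdot\nabla)u$ and $(v\cdot\nabla)w$, which is the opposite of the paper's convention $\varphi_{(u)}=u^jD_j\varphi$. Under the paper's reading the surviving term would involve $\hat v$ instead of $\hat u$. Second, Theorem~\ref{theorem0} needs the nonlinearity split into an $L_2(\cV^*)$ part plus an $L_1(\cH)$ part, not merely to lie in $L_1(\cV^*)$; your Morrey/bounded estimate does give this split. Third, applying the energy inequality on every subinterval $[s,t]$ is exactly what makes the final Gronwall step legitimate. An inequality anchored only at $t=0$ would not give exponential decay.
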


Theorem \ref{theorem3} shows that any admissible solution to \eqref{SNS2}--\eqref{bin2}
is an  $\cH$-solution and it is unique in the restricted Leray-Hopf class 
whenever its Morrey component is sufficiently small. 
Hence Theorem \ref{theorem1} obviously follows by virtue of the 
following corollary.  
\begin{corollary}
                                                               \label{corollary1}
Let Assumptions 2.1, 2.2 and 2.3 hold, and let $u$ be a solution to
$(2.1)$--$(2.2)$ such that almost surely 
\begin{equation}
                                                        \label{LPS1}
\int_0^T\Big( \int_{\cR}|u_s(x)|^p\,dx\Big)^{q/p}\,ds < \infty
\end{equation}
for some
\begin{equation}
                                                                  \label{LPS2}
d<p\le\infty,
\qquad
2\le q<\infty,
\end{equation}
satisfying the Ladyzhenskaya--Prodi--Serrin condition
\begin{equation}
                                                                   \label{LPS3}
\frac{d}{p}+\frac{2}{q}\le1.
\end{equation}
Then $u$ is an $\mathcal H$-solution. Moreover, if $v\in L_{p',q'}$ is a solution 
to $(2.1)$--$(2.2)$, such that $p',q'$ satisfy \eqref{LPS2}-\eqref{LPS3},  
then for its $\mathcal H$-valued c\`adl\`ag modification,
still denoted by $v$, we have 
$$
u_t=v_t\quad\text{almost surely for all } t\in[0,T]. 
$$
\end{corollary}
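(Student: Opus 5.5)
The strategy is to reduce Corollary \ref{corollary1} to Theorems \ref{theorem2} and \ref{theorem3}. We show that a solution $u$ satisfying \eqref{LPS1}--\eqref{LPS3} is admissible, with a Morrey component whose constant $\hat u$ is as small as we like, and whose bounded component satisfies \eqref{admissible_essup_upperbound}. The same decomposition applies to $v$ with $(p',q')$, so $v$ is admissible too.

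The decomposition uses a time-dependent truncation level. Fix a small $\eta>0$ and a threshold $K_t$, and set
$$
u^M_t=u_t{\bf 1}_{|u_t|>K_t},\qquad u^B_t=u_t{\bf 1}_{|u_t|\le K_t}.
$$
The bounded part trivially satisfies $\bar u_t\le K_t$. For the Morrey part we use H\"older's inequality with $r\le d<p$. For a ball $B$ of radius $\rho\le1$,
$$
\Big(\dashint_B{\bf 1}_{\cR}|u^M_t|^r\,dx\Big)^{1/r}\le N\rho^{-d/p}\,|u^M_t|_{L_p}\le N\rho^{-1}\,|u^M_t|_{L_p},
$$
where the last step uses $d/p\le1$ and $\rho\le 1$. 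It therefore suffices to make $|u^M_t|_{L_p}\le\eta$ for a.e.\ $t$. Chebyshev's inequality gives
$$
|u_t{\bf 1}_{|u_t|>K}|_{L_p}^p\le K^{-(p-2)}\int|u_t|^{p}\cdots,
$$
but this does not give smallness uniformly in $t$. A cleaner route is to interpolate. Write
$$
|u{\bf 1}_{|u|>K}|_{L_p}\le |u|_{L_p},
$$
which is small only where $|u_t|_{L_p}$ is small. So we instead split in time and space together: on $\{t: |u_t|_{L_p}\le \eta\}$ put all of $u$ into $u^M$. On the complementary set of times, choose $K_t$ so that $|u_t{\bf 1}_{|u_t|>K_t}|_{L_p}=\eta$, which is possible by dominated convergence.

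We then need $\int_0^T K_t^2\,dt<\infty$. The Morrey norm $|\cdot|_{r,1}$ with $r\le d$ is dominated by the weak-$L_d$ (Lorentz) norm, which in turn is bounded by an interpolation between $L_2$ and $L_p$. Using the $L_\infty(\cH)$ bound, one estimates
$$
K_t\lesssim \eta^{-a}|u_t|_{L_p}^{b}|u_t|_{L_2}^{c},
$$
with exponents fixed by scaling. Exponent bookkeeping shows that $K_t^2\lesssim |u_t|_{L_p}^{q}$ exactly when $d/p+2/q\le1$; this is precisely where \eqref{LPS3} enters. The case $p=\infty$ is handled directly: take $u^M=0$ and $\bar u_t=|u_t|_{L_\infty}\in L_q\subset L_2$.

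Two technical points remain. First, \eqref{admissible_essup_upperbound} requires $\sup_\omega\int_0^T\bar u_t^2\,dt<\infty$, whereas \eqref{LPS1} only holds a.s. We handle this by localizing with stopping times $\tau_n=\inf\{t:\int_0^t K_s^2\,ds\ge n\}$. We apply Theorems \ref{theorem2} and \ref{theorem3} on $[0,\tau_n]$, adjusting $u^B$ after $\tau_n$ by moving that part into the Morrey part, or by stopping. This yields the $\cH$-solution property and $u=v$ on $[0,\tau_n]$; letting $n\to\infty$ gives $\tau_n\to T$ a.s. Second, optionality of $K_t$ must be checked; it follows since $K_t$ is defined by a measurable selection from optional quantities.

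The final step is the application of the main theorems. With $\eta$ chosen so that $\hat u\le\delta/N$, Theorem \ref{theorem2} gives the $\cH$-valued c\`adl\`ag modifications of $u$ and $v$. Theorem \ref{theorem3} with $u_0=v_0$ then gives $\E e^{-\phi_t}|u_t-v_t|_\cH^2=0$; here $\phi_t<\infty$ a.s.\ because $\bar u\in L_2$ after localization. Right-continuity then upgrades this to $u_t=v_t$ for all $t$ almost surely.

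The main obstacle is the truncation step: producing a pointwise-in-time, optional, small Morrey part with $\int K_t^2\,dt<\infty$ governed exactly by \eqref{LPS3}. The borderline case $d/p+2/q=1$ needs the precise scaling rather than a crude estimate.
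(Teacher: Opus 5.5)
\paragraph{The gap: your truncation level is not controlled.} Your overall reduction is the paper's: split $u_t$ at a level $K_t$, show that $u$ is admissible with $\hat u$ as small as needed, then invoke Theorems \ref{theorem2} and \ref{theorem3}. The truncation step fails as you set it up. You use the H\"older bound $\bigl(\dashint_B\mathbf{1}_{\cR}|u^M_t|^r\,dx\bigr)^{1/r}\le N\rho^{-d/p}|u^M_t|_{L_p}$, so you need the $L_p$-norm of the tail to be at most $\eta$. That pushes $K_t$ up to the height of any concentrated bump.

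For example, take $u_t=A\phi((x-x_0)/s)$ with a fixed profile $0\le|\phi|\le1$ and $As^{d/p}|\phi|_{L_p}=2\eta$. Then $|u_t\mathbf{1}_{\{|u_t|>K\}}|_{L_p}\le\eta$ forces $K\ge\lambda_0A$ for some $\lambda_0=\lambda_0(\phi)>0$. At the same time $|u_t|_{L_p}=2\eta$ stays fixed, while $|u_t|_{L_2}\to0$ as $s\to0$ and $A\to\infty$.

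Consequences:
\begin{itemize}
\item No bound $K_t\le N\eta^{-a}|u_t|_{L_p}^b|u_t|_{L_2}^c$ with $c\ge0$ can hold. A negative $c$ is useless, since you only have an upper bound on $|u_t|_{L_2}$.
\item Letting $s=s(t)$ vary gives $u$ with $|u_t|_{L_p}$ constant, so \eqref{LPS1} holds for every $q$, yet $\int_0^TK_t^2\,dt=\infty$.
\item So the ``exponent bookkeeping'' is not merely omitted; it is unavailable for your $K_t$.
\item Separately, $K\mapsto|u_t\mathbf{1}_{\{|u_t|>K\}}|_{L_p}$ can jump, so ``$=\eta$'' need not be attainable.
\end{itemize}
Such bumps in fact have small Morrey norm, about $As=|u_t|_{L_p}s^{1-d/p}$. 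The $L_p$-level H\"older step throws exactly this away.

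\paragraph{The missing idea.} Measure the tail in $L_d$ itself (take $r=d$) and use the pointwise inequality $|u|^d\mathbf{1}_{\{|u|\ge\zeta\}}\le\zeta^{d-p}|u|^p$, valid because $p>d$. This gives $\dashint_B\mathbf{1}_{\cR}|u^M_t|^d\,dx\le N(d)\rho^{-d}\zeta_t^{d-p}|u_t|_{L_p}^p$. With the explicit level $\zeta_t=c|u_t|_{L_p}^{p/(p-d)}$ the right-hand side equals $N(d)c^{d-p}\rho^{-d}$. Hence $\hat u=(N(d)c^{d-p})^{1/d}$ is a deterministic constant, as small as you like for large $c$. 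Moreover $\bar u_t\le\zeta_t$ and $\int_0^T\zeta_t^2\,dt=c^2\int_0^T|u_t|_{L_p}^{2p/(p-d)}\,dt<\infty$, because $2p/(p-d)\le q$ is equivalent to \eqref{LPS3} and $T<\infty$.

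This is the paper's argument. It needs no splitting in time, no $L_2$ bound, and no measurable selection, since $\zeta_t$ is an explicit optional functional of $u_t$.

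\paragraph{Localization.} You correctly observe that \eqref{admissible_essup_upperbound} asks for $\sup_\omega\int_0^T\bar u_t^2\,dt<\infty$, while \eqref{LPS1} holds only almost surely; the paper passes over this. Stopping handles it, since the proofs of Theorems \ref{theorem2} and \ref{theorem3} use $\int_0^T\bar u_t^2\,dt$ only through its almost sure finiteness together with localization. Moving the bounded part into $u^M$ after $\tau_n$ does not work, because it would destroy the smallness of $\hat u$.
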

\begin{proof}
By virtue of Theorem \ref{theorem3} we need only show that if 
$u$ is and $\bR^d$-valued  $\cO\otimes\cB(\cR)$-measurable 
function on $\Omega\times[0,T]\times\cR$ 
such that almost surely \eqref{LPS1} holds for a pair $(p,q)$ satisfying 
\eqref{LPS2}--\eqref{LPS3}, then $u$ is an admissible function 
such that $\hat u$ is as small as we wish.  
The argument proving this was given first in \cite{K2022b} and then 
used also in \cite{GK2026}. For the convenience of the reader 
we present it also here. 
If $p<\infty$ then let $r=d$ and set 
$$
\zeta_t=c\Big(\int_{\cR}
|u_t(x)|^{p }\,dx\Big)^{1/(p -d )} 
\quad\text{with a constant $c$}, 
$$
and define 
$u^{M}_t(x)=u_t(x){\bf1}_{|u_t(x)|\geq \zeta_t}$,
$u^B_t=u_t-u^M_t=u_t{\bf1}_{|u_t|\leq \zeta_t}$. 
Then for any $B\in\bB_{\rho}$ and $\varepsilon>0$ 
$$
\dashint_{B }|u^{M}_t(x)|^{d}\,dx
\leq \zeta_t^{d-p }
\dashint_{B }{\bf1}_{\cR}|u_t(x)|^{p }\,dx
\leq N(d)c^{d-p }\rho^{-d}\leq \varepsilon \rho^{-d}
$$
for sufficiently large $c=c(d,p,\varepsilon)$.  
Clearly, 
$|u_t^{B} |\leq \zeta_t $ such that  
$$
\int_{0}^{T}\zeta_t^{2}\,dt
=c^{2}\int_{0}^{T}|u_t|^{2p/(p-q) }_{L_p}
\,dt<\infty,\,\, \text{since $2p/(p-q)\leq q$.}
$$
 If $p=\infty$ then we take $u^M=0$, $u^B=u$, 
$\hat u=0$ and $\bar u_t:=|u_t|_{L_{\infty}}$,  and notice that 
$$
\int_0^T\bar u_t^2\,dt=\int_0^T|u_t|^2_{L_{\infty}}\,dt<\infty\,\,\text{(a.s.), 
\,\,since $q\geq 2$},
$$
which completes the proof of the corollary. 
\end{proof}
\begin{remark}
 If $u\in L_{d,\infty}$ (a.s.) 
 then the above calculations show that $u$ remains admissible 
in the critical case  $r=d$. However, in order to obtain the corollary, 
we need additional assumptions to guarantee that the Morrey 
norm of $u^M_t$ is sufficiently small, uniformly 
in $(\omega,t)$.  We note that in the three-dimensional case ($d=3$)  uniqueness 
and regularity results 
for Hopf-Leray weak solutions of the  
deterministic Navier-Stokes equations \eqref{eq1}--\eqref{bin1} 
(with $\frf=0$, $f=0$) were established in \cite{ESS2003} under the assumption that 
the solutions belong to $L_{d,\infty}$. More generally for all $d\geq3$ 
analogous uniqueness and regularity results were proved in \cite{DoDa2009}.  
\end{remark}
 
In the proofs of Theorems \ref{theorem2}, \ref{theorem3} and \ref{theorem4}
we will utilise a version of It\^o's formula for 
Banach space-valued semimartingales in the framework of Gelfand triplets
$
V \hookrightarrow H \equiv H^{\ast} \hookrightarrow V^{\ast},
$
where $H$ is a Hilbert space, $V$ is a Banach space, 
and $H^\ast$ and $V^\ast$ denote their dual spaces. 
In the next section we present this version which is an extension 
of the It\^o formula 
from in \cite{GK1982} in the direction when the drift component of 
of the semimartingale is a the sum of a $V^{\ast}$-valued process 
and an $H$-valued process such that $(|v_t^{\ast}|_{V^{\ast}})_{t\geq0}$ is locally 
square integrable, but $h=(|h_t|_H)_{t\geq 0}$ is only   
locally integrable in time $t$.
Although this extension may appear to be a minor technical generalization, 
it can in fact play an essential role in many applications, 
as demonstrated in \cite{GyongyKrylov} and \cite{GK2026}   
in the special case where both $H$ and $V$ are Hilbert spaces.

\mysection{ An It\^o's formula for semimartingales in Banach Spaces}

Let $(V,\|\cdot\|_{V})$ be a separable and reflexive real Banach 
space which is continuously and densely 
embedded in a Hilbert space $(H, |\cdot|)$. Thus we have 
$$
V\hookrightarrow H\equiv H^{\ast}\hookrightarrow V^{\ast}, 
$$
where $H^{\ast}\hookrightarrow V^{\ast}$ is the adjoint of the embedding 
$V\hookrightarrow H$, and $H^{\ast}$ is identified with $H$ with the help 
of the inner product in $H$, denoted by $(\cdot,\cdot)$. 
We use the notation 
$\langle\cdot, \cdot\rangle$ for the duality pairing between $V$ and $V^{\ast}$ 
and $|\cdot|_{V^{\ast}}$ denotes the norm in $V^{\ast}$. 
Note that if $v^{\ast}\in V^{\ast}$ for some $i$, then its restriction to $V$ belongs 
to $V^{\ast}$ note that $\langle v^{\ast},v\rangle=(h,v)$   
for all $v\in V$ when $v^{\ast}=h\in H$. 
Let $m=(m(t))_{t\geq0}$ be an $H$-valued locally square integrable martingale 
such that $\langle m\rangle$, the Doob-Meyer process of $|m|^2_H$ is continuous. 
 
Finally let $v=(v(t))_{t\geq0}$, $(h(t))_{t\geq0}$ and $(v^{\ast}(t))_{t\geq0}$  
be progressively measurable processes with values in $V$, $H$ and $V^{\ast}$  
respectively, such that for every $T>0$
$$
\int_0^T\|v(t)\|^2+|v^{\ast}(t)|^2_{V^{\ast}}+|h(t)|\,dt<\infty\quad\text{(a.s.)}.
$$
Then the following version of It\^o's formula plays an important role 
in this paper.  

\begin{theorem}
                                            \label{theorem0}

Suppose that for each $\varphi \in V$ 
\begin{equation}
                                                                                     \label{eq:1}
(v(t),\varphi)=
\int_{(0,t]}\langle v^{\ast}(s),\varphi\rangle\, dA(s) 
+\int_{(0,t]}(h(s),\varphi)\, dA(s)
+ (m(t),\varphi), 
\end{equation}
for $P\otimes dt$-almost every $(t,\omega)\in\Omega\times(0,\infty)$. 
Then there is a $\tilde\Omega\subseteq\Omega$ of full probability 
and
an $H$-valued c\`adl\`ag process $\tilde v$ such that 
the following statements hold: 
\begin{enumerate}
\item[(i)] For $dA\times \P$ almost all $(t, \omega)$ 
satisfying $t \in (0,\tau(\omega))$ we have $\tilde v=v$.
\item[(ii)] For all $\omega \in \tilde{\Omega}$ 
and $t\geq0$ we have
\begin{align}            
(\tilde v(t),\varphi)= &
 \int_{(0,t]}\langle v^{\ast}(s),\varphi\rangle\,ds                      \nonumber\\
 &+\int_0^t(h(s),\varphi)\,ds
 + (m(t),\varphi)\quad \textrm{for all }\, \varphi \in V.                \nonumber
\end{align}
\item[(iii)] For all $\omega \in \tilde{\Omega}$ and $t\geq0$ we have 
\begin{align}
|\tilde v(t)|^2  = 
& |m(0)|^2 + 2\sum_{i=1}^k\int_{(0,t]}\langle v^{\ast}_i(s),v(s)\rangle\,ds\nonumber\\ 
& +2 \int_{(0,t]}(h(s),v(s))\,ds+ 2\int_{(0,t]}( \tilde v(s-),dm(s))
+ [m](t). \nonumber 
\end{align}
\end{enumerate}
\end{theorem}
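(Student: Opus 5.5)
The plan is to reduce Theorem \ref{theorem0} to the classical It\^o formula of \cite{GK1982}, where the drift is $V^{\ast}$-valued with square integrable norm. The extra $H$-valued term $h$ is absorbed by a time change followed by a localisation. (The statement mixes $dA$ and $ds$; I read $dA=ds$ throughout, and in (iii) the sum over $i$ as the single term $\langle v^{\ast}(s),v(s)\rangle$.)

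\emph{Localisation.} Define the stopping times
\[
\tau_n=\inf\Big\{t\geq0:\int_0^t\big(\|v(s)\|^2+|v^{\ast}(s)|_{V^{\ast}}^2+|h(s)|\big)\,ds+\langle m\rangle(t)\geq n\Big\}\wedge n .
\]
Since the integrand is locally integrable and $\langle m\rangle$ is continuous, $\tau_n\uparrow\infty$ a.s. It suffices to construct $\tilde v$ on each $[0,\tau_n]$ and check that the constructions agree. On $[0,\tau_n]$ all relevant quantities are bounded by $n$.

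\emph{Construction of $\tilde v$ and statements (i), (ii).} Set
\[
\tilde v(t):=m(t)+\int_0^t h(s)\,ds+\int_0^t v^{\ast}(s)\,ds ,
\]
where the last integral is a Bochner integral in $V^{\ast}$ and the middle one is a Bochner integral in $H$. Then $\tilde v$ is a $V^{\ast}$-valued c\`adl\`ag process, and by \eqref{eq:1} it equals $v$ for $P\otimes dt$-a.e. $(\omega,t)$, which gives (i). Identity (ii) holds for every $t$ and every $\varphi\in V$ by construction; the exceptional set is independent of $\varphi$ because $V$ is separable.

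\emph{Proving $\tilde v$ is $H$-valued and c\`adl\`ag, and statement (iii).} The main obstacle is that $|h|$ is only integrable in time, not square integrable, so \cite{GK1982} cannot be applied directly. The first approach I would try is to write $h\,ds=\big(h/(1+|h|)\big)\,dA$ with $dA=(1+|h|)\,ds$ and then apply the $dA$-version of the It\^o formula from \cite{GK1982}. However, the $V^{\ast}$ and $V$ terms would then need integrability with respect to $dA$, which fails.

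I would therefore use approximation instead. Truncate $h_k=h\mathbf 1_{|h|\leq k}$ and let $\tilde v_k$ be the process built from $v^{\ast}$, $h_k$ and $m$. For $\tilde v_k$ the drift $v^{\ast}+h_k$ lies in $L_2([0,\tau_n],V^{\ast})$, so \cite{GK1982} gives the formula in (iii) for $\tilde v_k$. Note that $\tilde v_k$ corresponds to $v_k:=v-\int_0^{\cdot}(h-h_k)\,ds$, which is not $V$-valued. To handle this, I would mollify $\int(h-h_k)\,ds$, or more simply apply \cite{GK1982} to the difference $\tilde v-\int_0^{\cdot}h\,ds$ in a product form.

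The cleanest route is to set $g(t)=\int_0^t h\,ds$, which is continuous and of bounded variation in $H$, and $y=\tilde v-g$. Then $y$ solves an equation with drift $v^{\ast}$ and martingale $m$, but its $V$-part $v-g$ is not $V$-valued unless $g\in V$. To fix this, approximate $h$ by $h_k\in L_\infty([0,\tau_n],V)$ converging in $L_1([0,\tau_n],H)$, using density of $V$ in $H$. Then $v-g_k$ is $V$-valued, so \cite{GK1982} applies to $y_k=\tilde v-g_k$. Combining this with the deterministic chain rule for $|y_k+g_k|^2$ yields (iii) with $h_k$ replacing $h$ in the drift and remainder terms $2\int(h-h_k,v)\,ds$.

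\emph{Passing to the limit.} The identities give the Cauchy estimate
\[
\sup_{t\leq\tau_n}|\tilde v_k-\tilde v_l|^2\leq 2\int_0^{\tau_n}|h_k-h_l|\,|\tilde v_k-\tilde v_l|\,ds+\cdots ,
\]
where the omitted terms are stochastic integrals controlled by the Burkholder--Davis--Gundy inequality. By Young's inequality this is at most $\tfrac12\sup|\tilde v_k-\tilde v_l|^2+2\big(\int|h_k-h_l|\,ds\big)^2$, which tends to $0$ in probability. This uniform-in-time $H$-convergence gives that $\tilde v$ is $H$-valued c\`adl\`ag. Finally, passing to the limit in each term of (iii), using the bound $\sup|\tilde v|\leq C$ to control $\int(h,v)\,ds$, yields (iii).
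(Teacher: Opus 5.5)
\textbf{There is a genuine gap at the central step.} Your reduction to \cite{GK1982} breaks down because none of your approximating processes meets both hypotheses of that theorem at once: a representative in $L_2(V)$ \emph{and} a drift in $L_2(V^\ast)$.

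You correctly notice that $\tilde v_k=m+\int_0^\cdot h_k\,ds+\int_0^\cdot v^\ast\,ds$ has no $V$-valued representative. The situation is worse than that. Since $\tilde v_k=\tilde v-\int_0^\cdot(h-h_k)\,ds$ differs from $\tilde v$ by a continuous $H$-valued process, $\tilde v_k$ is $H$-valued and c\`adl\`ag if and only if $\tilde v$ is. So this approximation is circular.

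Your proposed fix, $y_k=\tilde v-g_k$ with $V$-valued $h_k$, does give a representative $v-g_k\in L_2(V)$. But the drift of $y_k$ is $v^\ast+(h-h_k)$, and $h-h_k$ is only integrable (not square integrable) in time. That is exactly the original difficulty, so \cite{GK1982} does not apply to $y_k$. You seem to have conflated $y_k$ with $y=\tilde v-g$, whose drift is $v^\ast$ alone but which has no $V$-valued representative. Making $h-h_k$ small in $L_1([0,\tau_n],H)$ does not help, because the obstruction is time integrability, not size.

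Your Cauchy estimate is also empty. The difference $\tilde v_k-\tilde v_l=\int_0^\cdot(h_k-h_l)\,ds$ contains no stochastic terms and trivially satisfies $\sup_t|\tilde v_k-\tilde v_l|\le\int_0^{\tau_n}|h_k-h_l|\,ds$. What is never established is that even a single $\tilde v_k$ is $H$-valued.

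\textbf{What is missing} is an a priori bound on $\sup_t|\tilde v(t)|_H$, obtained from the equation itself before any It\^o formula is available. This is what the paper has in mind when it says the result follows by adapting the proof of Theorem 1 of \cite{GK1982}.
\begin{enumerate}
\item[(a)] Work along partitions $\{t_i\}$ of time points at which $\tilde v(t_i)=v(t_i)\in V$, chosen so that the step approximations of $v$ converge in $L_2(V)$.
\item[(b)] Use the discrete energy identity $|v(t_{i+1})|^2-|v(t_i)|^2=2(v(t_{i+1})-v(t_i),v(t_{i+1}))-|v(t_{i+1})-v(t_i)|^2$.
\item[(c)] The new term coming from $h$ is bounded by $2\int_0^T|h|\,ds\cdot\max_i|v(t_i)|$. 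Young's inequality absorbs it into $\max_i|v(t_i)|^2$ on the left, giving a bound uniform in the partition.
\item[(d)] Hence $\tilde v$ is bounded in $H$ and weakly c\`adl\`ag. The remaining steps of \cite{GK1982} then go through, with $\int(h,v)\,ds$ controlled by this sup bound.
\end{enumerate}
Your Young's-inequality idea is the right mechanism. It must be applied to $\tilde v$ itself at the discrete level, not to differences of approximations.
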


\smallskip
\noindent
This theorem can be obtained by an adaptation of the proof 
of Theorem 1 in \cite{GK1982}.

\mysection{Proof of the main results}

First we need some lemmas and corollaries. 
The first one is from \cite{K2022b}. 
\begin{lemma}
                                      \label{lemma k}
Let $f$ be a real-valued admissible function. Then
\begin{equation}
                                    \label{1.26.6}
\int_{\cR}|f_t^M|^2|\varphi|^2\,dx
\leq N\hat f^{\,2}\int_{\cR}|D\varphi|^2+|\varphi|^2\,dx
\,\, \text{$(\P\otimes dt$-a.e.}).
\end{equation}
 for $\varphi\in C_0^\infty(\cR,\bR)$, where
$N=N(d,r)$ is a constant depending on $d$ and $r$.
\end{lemma}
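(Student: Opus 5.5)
The plan is to reduce the estimate to a Fefferman--Phong type inequality on $\bR^d$ for weights in the Morrey space $E_{r/2,2}$. Fix $(\omega,t)$ outside the exceptional null set, so that $g:={\bf1}_{\cR}|f^M_t|^2$ satisfies
$$
\Big(\dashint_{B}g^{r/2}\,dx\Big)^{2/r}\leq \hat f^{\,2}\rho^{-2}
\quad\text{for all } B\in\bB_\rho,\ \rho\leq1 .
$$
Since $\varphi\in C^\infty_0(\cR,\bR)$, we extend $\varphi$ by zero to all of $\bR^d$. The left-hand side of \eqref{1.26.6} equals $\int_{\bR^d}g|\varphi|^2\,dx$, and the right-hand side is an integral over $\bR^d$. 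It therefore suffices to prove
$$
\int_{\bR^d}g\varphi^2\,dx\leq N(d,r)\,\hat f^{\,2}\int_{\bR^d}\big(|D\varphi|^2+\varphi^2\big)\,dx,
\qquad \varphi\in C^\infty_0(\bR^d).
$$

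First I localise. Cover $\bR^d$ by cubes $Q$ of side $1/\sqrt d$ with bounded overlap, together with a smooth partition of unity $\sum\eta_Q^2=1$ satisfying $|D\eta_Q|\leq N(d)$. Writing $\varphi^2=\sum(\eta_Q\varphi)^2$, the problem reduces to a single unit-scale cube $Q$ and functions $\psi=\eta_Q\varphi$ supported in $Q$. On such a cube, only balls of radius $\leq1$ are relevant, and $g$ restricted to a slightly larger ball $B_1$ has Morrey norm $\sup_{\rho\le1}\rho^2(\dashint_{B_\rho}g^{r/2})^{2/r}\le\hat f^{\,2}$. Summing the local estimates afterwards produces the $|\varphi|^2$ term, through $|D(\eta_Q\varphi)|^2\le 2|D\varphi|^2\eta_Q^2+N\varphi^2$ and the bounded overlap.

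The local estimate is the Fefferman--Phong/Chiarenza--Frasca inequality:
$$
\int g\psi^2\le N\,|g|_{E_{r/2,2}}\int|D\psi|^2,\qquad r/2>1,
$$
valid for $\psi$ supported in a unit ball when the Morrey condition holds only for radii $\le1$. This is where $r>2$ is used. I would prove it as follows. Write $|\psi|\le N(d)I_1|D\psi|$, where $I_1$ is the Riesz potential, which holds for compactly supported $\psi$. Then
$$
\int g\psi^2\le N\int g\,(I_1F)^2,\qquad F=|D\psi|{\bf1}_{B_2}.
$$
To bound the right-hand side, I use the trace inequality $\int(I_1F)^2g\le N\|g\|_{E_{r/2,2}}\int F^2$. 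One proof uses Hedberg's pointwise bound of $I_1F$ by the maximal function $M F$ and the fractional maximal function of $F$. A cleaner route is Chiarenza--Frasca: one shows $g^{1/2}I_1(g^{1/2}\cdot)$, or by duality $I_1(g\,\cdot)$, is bounded on $L_2$. This is obtained by bounding $I_1$ by a sum over dyadic scales $\rho\le 1$ (large scales being harmless since $F$ is supported in $B_2$, giving only a contribution $N\int g\psi^2$-free term controlled by $\int\psi^2$). Each scale is controlled by $\rho^{-2}$ times averages, and Hölder with exponent $r/2$ is used together with the $L_{s}$-boundedness of maximal functions for $s<r/2$.

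The main obstacle is precisely this trace inequality. It requires the Morrey exponent $r/2$ to be strictly bigger than $1$ (the case $r/2=1$ fails), and the restriction $\rho\le1$ must be handled. Since the paper attributes the lemma to \cite{K2022b}, I would cite Fefferman's theorem for the unrestricted-radius version. To pass from radius $\le1$ to all radii for functions supported near a unit cube, I replace $g$ by $g{\bf1}_{B_2}$. For this weight, balls of radius $\rho>1$ satisfy
$$
\rho^2\Big(\dashint_{B_\rho}(g{\bf1}_{B_2})^{r/2}\Big)^{2/r}
\le \rho^{2-2d/r}\,N\hat f^{\,2}\le N\hat f^{\,2},
$$
using $r\le d$. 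The restriction $r\le d$ thus enters exactly here, and the local estimate follows.
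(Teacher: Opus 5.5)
Your proof is correct. The paper gives no argument of its own for this lemma; it simply quotes it from \cite{K2022b}. Your proposal is therefore a self-contained derivation of the cited estimate rather than a variant of an in-paper proof.

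The route you take is a standard one, and each step checks out:
\begin{enumerate}
\item[(a)] The factor ${\bf 1}_{\cR}$ in the Morrey condition lets you extend $\varphi$ by zero and work on all of $\bR^d$.
\item[(b)] The unit-scale partition of unity $\sum_Q\eta_Q^2=1$ reduces matters to functions supported in a fixed ball. The term $|D\eta_Q|^2\varphi^2$ is exactly what produces the lower-order $|\varphi|^2$ on the right-hand side, which is the price of having the Morrey bound only for $\rho\le1$.
\item[(c)] Truncating to $g{\bf 1}_{B_2}$ turns the restricted Morrey bound into one valid for all radii. This works because $B_2$ is covered by $N(d)$ unit balls, so $\int_{B_2}g^{r/2}\le N\hat f^{\,r}$, and $\rho^{2-2d/r}\le 1$ for $\rho>1$ precisely when $r\le d$.
\item[(d)] The Fefferman--Phong inequality with exponent $r/2>1$, available since $d\ge3$, then gives the local bound with constant $N(d,r)\hat f^{\,2}$.
\end{enumerate}
Compared with the bare citation, your write-up shows where each hypothesis ($d\ge 3$, $2<r\le d$, and the restriction $\rho\le1$) enters. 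The depth of the result is still carried by Fefferman's theorem.

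One caveat concerns your optional sketch of a direct proof of the trace inequality, which is loose. By a $TT^*$ argument, the operator whose $L_2$-boundedness is equivalent to $\int g(I_1F)^2\le C\int F^2$ is $g^{1/2}I_2(g^{1/2}\,\cdot)$, not $g^{1/2}I_1(g^{1/2}\,\cdot)$. Also, the large-scale part of $I_1F$ is controlled by $\int F^2=\int|D\psi|^2$, not by $\int\psi^2$. Since you ultimately invoke Fefferman's theorem for the all-radii weight $g{\bf 1}_{B_2}$, your proof does not depend on that sketch.
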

Define the trilinear form
\begin{equation*}
                                               \label{trilinear_form_definition}
b(u,v,w):=\int_{\bR^d}u^j(x)D_j v^i(x)w^i(x)\,dx,
\end{equation*}
for $\bR^d$-valued functions $u,v,w$ on $\bR^d$ 
whenever the integral is well defined. 
Then by \ref{lemma k} we have the following propositions. 
\begin{proposition}
                             \label{proposition1}
Let $w$ be an $\R^d$-valued admissible function and let
$v,w\in C_0^{\infty}(\cR,\bR^d)$. 
Then for $\P\otimes dt$-almost every $(\omega,t)$
we have 
\begin{align}
|b(u,v,w_t)|
&\leq N\hat w \big(|Du|_{L_2}+|u|_{L_2}\big)|Dv|_{L_2}  \nonumber\\
&\quad+N\bar w_t|Dv|_{L_2}|Dv|_{L_2}             \nonumber
\end{align}
with a constant $N=N(d,r)$. 
\end{proposition}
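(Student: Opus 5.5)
We read the statement with $u,v\in C_0^\infty(\cR,\bR^d)$ and $w$ the admissible function; the literal hypothesis ``$v,w\in C_0^\infty$'' with $w$ admissible looks like a typo. We also read the second term on the right-hand side as $N\bar w_t|u|_{L_2}|Dv|_{L_2}$. That is what the argument below produces, and $|Dv|_{L_2}^2$ cannot bound something linear in $u$.

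The plan is to split $w=w^M+w^B$ as in Definition~\ref{definition admissible}. By linearity of $b$ in its third argument,
$$
b(u,v,w_t)=\int_{\cR}u^jD_jv^i\,w^{M,i}_t\,dx+\int_{\cR}u^jD_jv^i\,w^{B,i}_t\,dx .
$$
The integrals run over $\cR$ because $u,v$ are supported in $\cR$. Each piece is estimated separately.

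\emph{Bounded part.} Fix $(\omega,t)$ with $\operatorname*{ess\,sup}_x|w^B_t|\le\bar w_t$, which holds for a.e.\ $(\omega,t)$. Then Cauchy--Schwarz gives
$$
\Big|\int u^jD_jv^iw^{B,i}_t\,dx\Big|\le\bar w_t\int|u||Dv|\,dx\le\bar w_t|u|_{L_2}|Dv|_{L_2}.
$$

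\emph{Morrey part.} By Cauchy--Schwarz,
$$
\Big|\int u^jD_jv^iw^{M,i}_t\,dx\Big|\le|Dv|_{L_2}\Big(\int_{\cR}|w^M_t|^2|u|^2\,dx\Big)^{1/2},
$$
and $|w^M_t|^2|u|^2\le\sum_{i,j}|w^{M,i}_t|^2|u^j|^2$. Each component $w^{M,i}$ is a real-valued admissible function: take Morrey component $w^{M,i}$ and bounded component $0$. Its Morrey constant is at most $\hat w$, since $|w^{M,i}|\le|w^M|$ makes \eqref{admisdable_MN_upperbound} hold with $\hat w$. Applying Lemma~\ref{lemma k} with $f=w^{M,i}$ and $\varphi=u^j\in C_0^\infty(\cR,\bR)$, then summing over $i,j$, yields
$$
\int|w^M_t|^2|u|^2\,dx\le N(d,r)\,\hat w^{\,2}\big(|Du|_{L_2}^2+|u|_{L_2}^2\big).
$$
Taking square roots gives the first term of the claimed bound with $N=N(d,r)$.

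\emph{Main obstacle.} The delicate point is the exceptional $\P\otimes dt$-null set. As stated, Lemma~\ref{lemma k} holds a.e.\ for each fixed $\varphi$, whereas the proposition should hold on one null set for all $u,v$ simultaneously. We resolve this by observing that the proof of Lemma~\ref{lemma k} in \cite{K2022b} is a deterministic inequality. It applies to any function whose $|\cdot|_{r,1}$-norm is at most $\hat f$. Hence inequality \eqref{1.26.6} holds for all $\varphi$ at every $(\omega,t)$ outside the single null set where $|w^M_t|_{r,1}\le\hat w$ or $\operatorname*{ess\,sup}|w^B_t|\le\bar w_t$ fails. If one prefers not to reopen that proof, a fallback is available. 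Apply the lemma to a countable $W^1_2$-dense subset of $C_0^\infty(\cR,\bR)$, then pass to all $\varphi$ by Fatou's lemma along a.e.-convergent subsequences. This also extends the estimate to $u\in H^1_0$.
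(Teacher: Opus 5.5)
Your proof is correct and is essentially the paper's own argument: split $w_t=w^M_t+w^B_t$, bound the Morrey part by Cauchy--Schwarz and Lemma~\ref{lemma k} applied with $f=w^{M,i}_t$ and $\varphi=u^k$, and bound the bounded part by $\bar w_t|u|_{L_2}|Dv|_{L_2}$, which is also what the paper's proof yields and so confirms your reading of the hypotheses as $u,v\in C_0^\infty(\cR,\bR^d)$ and of the second term as $N\bar w_t|u|_{L_2}|Dv|_{L_2}$. Your treatment of the single exceptional $\P\otimes dt$-null set is extra care that the paper omits, and either of your two remedies settles it.
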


\begin{proof}
Clearly, 
$$
(u^{k}D_k v,w_t)= (w_t^M u^k,D_kv)+(w^B_t u^k,D_k v). 
$$
By using the Cauchy-Schwarz-Bunyakovsky inequality and then 
\eqref{1.26.6} with $f=w_t^{Mi}$ 
and $\varphi=u^k$ (for each $k,i\in\{1,2,...,d\}$), we get 
$$
(w^M_tu^k,D_kv)\leq N\hat w(|Du|_{L_2}
+|u|_{L_2})|Dv|_{L_2}
$$
with a constant $N=N(d,r)$. 
Hence we get the corollary by noting that 
$$
(w^B_tu^k,D_kv)\leq \int_{\cR} \big(\sum_{i,k}|w^{Bi}u_k|^2\big)^{1/2}
\big(\sum_{i,k}|D_kv^i|^2\big)^{1/2}\,dx\leq \bar w_t|u|_{L_2}|Dv|_{L_2}. 
$$
\end{proof}

\begin{proposition}
                             \label{proposition2}
Let $u$ be an $\R^d$-valued admissible function and let
$v,w\in C^{\infty}_0(\cR,\bR^d)$. 
Then for $\P\otimes dt$-almost every $(\omega,t)$ we have 
\begin{align}
|b(u_t,v,w)|&\leq N\hat u \big(|Dw|_{L_2}|w|_{L_2}\big)|Dv|_{L_2}  
                                                                                  \nonumber\\
&\quad+\bar u_t|Dv|_{L_2}|w|_{L_2}                         \nonumber
\end{align}
with a constant $N=N(d,r)$. 
\end{proposition}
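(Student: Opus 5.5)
The plan is to split $u_t$ into its Morrey and bounded components and estimate the two resulting pieces of $b(u_t,v,w)$ separately, exactly as in the proof of Proposition~\ref{proposition1}. The roles of the factors are permuted: now the admissible function multiplies $w$ and $Dv$ is estimated in $L_2$. I read the factor $\big(|Dw|_{L_2}|w|_{L_2}\big)$ in the statement as $\big(|Dw|_{L_2}+|w|_{L_2}\big)$, which is what Lemma~\ref{lemma k} produces. With $u_t=u_t^M+u_t^B$ we have
\begin{equation*}
b(u_t,v,w)=\int_{\cR}u_t^{Mj}D_jv^i\,w^i\,dx+\int_{\cR}u_t^{Bj}D_jv^i\,w^i\,dx=:I_1+I_2 .
\end{equation*}
The domain of integration is $\cR$, because $v$ and $w$ are supported in $\cR$.

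For $I_2$, apply the pointwise Cauchy--Schwarz inequality to the sum over $i,j$, then use $\esssup_x|u^B_t|\leq\bar u_t$, and finally Cauchy--Schwarz in $x$. This gives $|I_2|\leq \bar u_t|Dv|_{L_2}|w|_{L_2}$ for $\P\otimes dt$-a.e.\ $(\omega,t)$, which is the second term of the claim.

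For $I_1$, first use Cauchy--Schwarz in $x$ to get
\begin{equation*}
|I_1|\leq\Big(\int_{\cR}|u_t^M|^2|w|^2\,dx\Big)^{1/2}|Dv|_{L_2}.
\end{equation*}
Next apply Lemma~\ref{lemma k} to the real-valued function $f:=|u^M|$, taking $f^M=|u^M|$ and $f^B=0$. This function is admissible with the same constant $\hat f=\hat u$, since \eqref{admisdable_MN_upperbound} only involves $|f^M|$, and it is $\cO\otimes\cB(\cR)$-measurable. Apply \eqref{1.26.6} with $\varphi=w^i$ for each $i=1,\dots,d$ and sum over $i$. This yields, $\P\otimes dt$-a.e.,
\begin{equation*}
\int_{\cR}|u_t^M|^2|w|^2\,dx\leq N\hat u^{\,2}\big(|Dw|^2_{L_2}+|w|^2_{L_2}\big).
\end{equation*}
Taking square roots and using $\sqrt{a^2+b^2}\leq a+b$ gives $|I_1|\leq N\hat u\big(|Dw|_{L_2}+|w|_{L_2}\big)|Dv|_{L_2}$ with $N=N(d,r)$. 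Adding this to the bound for $I_2$ proves the proposition.

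The only point that needs care is the exceptional set. Lemma~\ref{lemma k} holds $\P\otimes dt$-a.e., possibly with a null set depending on $\varphi$. Two facts resolve this. First, the null set in \eqref{admisdable_MN_upperbound} does not depend on $\varphi$. Second, the proof of Lemma~\ref{lemma k} (from \cite{K2022b}) is deterministic, given the Morrey bound at a fixed $(\omega,t)$. So for all $(\omega,t)$ outside a single null set, the estimate holds simultaneously for all $v,w\in C^\infty_0(\cR,\bR^d)$. This is the step I would verify first; everything else is routine.
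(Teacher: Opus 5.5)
Your proof is correct and is the argument the paper intends. The paper's proof only says it ``goes in the same way'' as Proposition~\ref{proposition1}: split $u_t=u^M_t+u^B_t$, bound the bounded part by $\bar u_t|Dv|_{L_2}|w|_{L_2}$, and bound the Morrey part by Cauchy--Schwarz and Lemma~\ref{lemma k} with $\varphi=w^i$. Applying the lemma to $|u^M|$ rather than to the components $u^{Mj}$ makes no difference, and reading the factor as $|Dw|_{L_2}+|w|_{L_2}$ is right, since that is the form in which the estimate is used in the proof of Proposition~\ref{proposition antisymmetry}.
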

\begin{proof}
The proof goes in the same way as that of the previous proposition.
\end{proof}

\begin{proposition}
                            \label{proposition antisymmetry}
Let $u$ be an $\mathbb R^d$-valued admissible function and let
$v,w\in \bH^1_0$. 
Then $|b(u_t,v,w)|<\infty$ \rm{($\P\otimes dt$-a.e.)}. 
Moreover, if $u_t\in\mathcal V$, 
then
\begin{equation*}
b(u_t,v,w)=-b(u_t,w,v), 
\end{equation*}
and hence $b(u_t,v,v)=0$. 
\end{proposition}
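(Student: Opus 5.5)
The plan is to prove finiteness first for smooth compactly supported $v,w$, extend by density to $\bH^1_0$, and then obtain antisymmetry by integrating by parts in the smooth case and passing to the limit.

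\emph{Finiteness.} Split $u_t=u^M_t+u^B_t$ and write
$$
|b(u_t,v,w)|\le \int_{\cR}|u^M_t||Dv||w|\,dx+\int_{\cR}|u^B_t||Dv||w|\,dx .
$$
For the Morrey part, apply Cauchy--Schwarz to get the bound $\big(\int_{\cR}|u^M_t|^2|w|^2dx\big)^{1/2}|Dv|_{L_2}$. Then Lemma \ref{lemma k}, applied componentwise with $f=|u^M_t|$ (admissible, with the same $\hat u$) and $\varphi=w^i$, gives
$$
\int_{\cR}|u^M_t|^2|w|^2\,dx\le N\hat u^2\big(|Dw|_{L_2}^2+|w|_{L_2}^2\big)\quad\text{for }w\in C_0^\infty .
$$
Since the left-hand side is lower semicontinuous under $H^1$-convergence (by Fatou along an a.e.\ convergent subsequence), the bound extends to all $w\in\bH^1_0$. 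The bounded part is at most $\bar u_t|Dv|_{L_2}|w|_{L_2}$, and $\bar u_t<\infty$ for a.e.\ $t$ because $\int_0^T\bar u_t^2dt<\infty$. Together these give $|b(u_t,v,w)|\le N(\hat u+\bar u_t)|w|_{W^1_2}|Dv|_{L_2}<\infty$ for $\P\otimes dt$-a.e.\ $(\omega,t)$; this is the estimate of Proposition \ref{proposition2}, now extended to $v,w\in\bH^1_0$.

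\emph{Antisymmetry.} Fix $(\omega,t)$ in the full-measure set where the Morrey bound holds, $\bar u_t<\infty$ and $u_t\in\cV$.

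1. For $v,w\in C_0^\infty(\cR,\bR^d)$, use $D_j(v^iw^i)=D_jv^i\,w^i+v^iD_jw^i$ to write
$$
b(u_t,v,w)+b(u_t,w,v)=\int_{\cR}u^j_tD_j(v^iw^i)\,dx .
$$
2. Show this integral is zero. Take $\psi_n\in\cD$ with $\psi_n\to u_t$ in $W^1_2$, which is possible by the definition of $\cV$. For each $n$, $\int\psi_n^jD_j(v\cdot w)\,dx=-\int(\operatorname{div}\psi_n)\,v\cdot w\,dx=0$. Since $D_j(v\cdot w)\in C_0^\infty\subset L_2$, the integrals converge as $\psi_n\to u_t$ in $L_2$, so the limit $\int u^j_tD_j(v\cdot w)\,dx$ is also $0$.
3. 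Pass from smooth $v,w$ to general $v,w\in\bH^1_0$ by approximating with $v_n,w_n\in C_0^\infty$. The bilinear bound from the finiteness step gives
$$
|b(u_t,v,w)-b(u_t,v_n,w_n)|\le |b(u_t,v-v_n,w)|+|b(u_t,v_n,w-w_n)|\le N(\hat u+\bar u_t)\big(|w|_{W^1_2}|D(v-v_n)|_{L_2}+|w-w_n|_{W^1_2}|Dv_n|_{L_2}\big)\to0,
$$
and the same applies to $b(u_t,w,v)$. Hence $b(u_t,v,w)=-b(u_t,w,v)$, and taking $w=v$ gives $b(u_t,v,v)=0$.

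The main obstacle is in step 2. For smooth $v,w$ the convergence is immediate, but one cannot integrate by parts directly with general $v,w\in\bH^1_0$, because $u_t\,v\cdot w$ need not be integrable in $W^1_1$. That is the reason for proving the identity only for smooth $v,w$ and then using the Morrey estimate, which is continuous in $(v,w)$, to extend it.
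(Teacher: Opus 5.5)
Your proof is correct, but it is organised differently from the paper's. The paper approximates only $v$ by $\varphi_\varepsilon\in C_0^\infty$ and keeps $w\in\bH^1_0$ arbitrary. It then integrates by parts directly,
$\int u^j_tD_j\varphi^i_\varepsilon w^i\,dx=-\int\varphi^i_\varepsilon D_j(w^iu^j_t)\,dx=-\int u^j_tD_jw^i\varphi^i_\varepsilon\,dx$.
This is legitimate because $w^iu^j_t$, a product of two $W^1_2$ functions, lies in $W^1_1$ with the Leibniz rule, and $\operatorname{div}u_t=0$ a.e.\ for $u_t\in\cV$. Finally it lets $\varepsilon\to0$ on both sides using the bilinear bound. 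So one smooth factor already suffices, and the obstacle you name at the end only forces one of $v,w$ to be smooth, not both.

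You instead make both $v$ and $w$ smooth and remove the divergence term by approximating $u_t$ with fields from $\cD$. That way, integration by parts and $\operatorname{div}=0$ are only used classically. This costs a double approximation, but it gains two things:
\begin{enumerate}
\item[(a)] Your step~2 uses only $L_2$-convergence $\psi_n\to u_t$, so your argument gives the antisymmetry for every admissible $u_t\in\cH$, not just $u_t\in\cV$, with no Sobolev product rule needed.
\item[(b)] Your Fatou argument explicitly extends the Morrey estimate of Lemma~\ref{lemma k} and Proposition~\ref{proposition2} from $C_0^\infty$ to $\bH^1_0$. The paper uses this extension tacitly, even though Propositions~\ref{proposition1} and~\ref{proposition2} are stated only for smooth compactly supported functions.
\end{enumerate}
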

\begin{proof}
Clearly, $|b(u_t,v,w)|<\infty$ \rm{($\P\otimes dt$-a.e.)} 
by Proposition \ref{proposition2}. 
For $\varepsilon>0$ let 
$\varphi_{\varepsilon}\in C_0^{\infty}(\cR,\bR^d)$ such that 
$|v_t-\varphi_{\varepsilon}|_{\bH^1_0}\leq\varepsilon$. Then 
by integration by parts, by the Leibniz rule 
and using that ${\rm div}\, u_t=0$,  we get 
\begin{equation}
                                                                 \label{1.12.2}
\int_{\bR^d}u^j_tD_j\varphi^i_{\varepsilon}w^i\,dx
=-\int_{\bR^d}\varphi_{\varepsilon}^iD_j(w^iu_t^j)\,dx 
=-\int_{\bR^d}u^j_tD_jw^i\varphi_{\varepsilon}^i\,dx. 
\end{equation}
By Proposition \ref{proposition1} we have 
$$
|b(u_t,v-\varphi_{\varepsilon},w)|\leq N\hat u(|Dw|_{L_2}
+|w|_{L_2})|Dv-D\varphi_{\varepsilon}|_{L_2}
$$
$$ 
+N\bar u_t|Dv-D\varphi_{\varepsilon}|_{L_2}|w|_{L_2},  
$$
which shows that 
$$
b(u_t,\varphi_{\varepsilon},w)\to b(u_t,v,w)
\quad \text{for $\varepsilon\to0$}.
$$
In the same way we get 
$\lim_{\varepsilon\to0} b(u_t,w, \varphi_{\varepsilon})= b(u_t,w, v)$. 
Thus letting $\varepsilon\to0$ in \eqref{1.12.2} we finish the proof 
of the lemma. 
\end{proof}

\begin{proof}[Proof of Theorem \ref{theorem2}]
Without loss of generality we may assume that 
$$
\E|u_0|_{\mathcal H}^2
+\E\int_0^T|\frF_t|_{ L_2}^2\,dt
+\E\left(\int_0^T|\frG_t|_{L_2}\,dt\right)^2<\infty. 
$$
Let $u=u^M +u^B$ be an admissible solution on $[0,T]$, and 
set $H=\cH$, $V=\cV$. Then clearly, $V$ is a separable Hilbert space,  
which is continuously and densely embedded into the Hilbert 
space $H$. 

We define the processes
$$
v_t^\ast\in V^\ast,
\quad
F_t\in H,
\quad
G_t=(G_t^k)_{k\in\bZ_0}\in \ell_2(H),
$$
by requiring 
\begin{equation} 
                                            \label{v*_varation}
\langle v,v_t^\ast\rangle_{V,V^*}
= -\langle a_t^{ij}D_j u_t+\frf_t^i(u_t),D_i v\rangle_{V,V^*}
-\langle u_t^MD_j u_t,v\rangle_{V,V^*} ,
\end{equation}
\begin{equation*}
(v,F_t)_H
=
(-u_t^BD_j u_t+f_t(u_t,Du_t)+\gamma_t^{ik}D_i g_t^k,v),
\end{equation*}
and
$$
(v,G_t^k)_H=(\sigma_t^{ik}D_i u_t+h_t^k(u_t),v)_H,
\quad k\ge1
$$
for every $v\in V$. 
By Assumptions \ref{assumption1}, \ref{assumption2}(i) and 
Proposition \ref{proposition2},
$$
|\langle a_t^{ij}D_j u_t+\frf_t^i(u_t),D_i v\rangle _{V,V^*}|
\leq
(\delta^{-1}|Du_t|_{L_2}+\lambda_t|u_t|_{L_2}
+|\mathfrak{ F}_t|_{L_2})|v|_V,
$$
and
$$
|(u_t^MD_j u_t,v)|
\leq N\hat u|v|_{V}|Du_t|_{L_2}
$$
with a constant $N=N(d,r)$.  
Hence \eqref{v*_varation} defines a $V^{\ast}$-valued process $v^{\ast}$ 
such that 
$$
|v^{\ast}_t|_{V^{\ast}}
\leq (\delta^{-1}+N\hat u)|Du_t|_{L_2}+\lambda_t |u_t|_{L_2}
+|\mathfrak{ F}_t|_{L_2}.
$$
Thus taking into account that almost surely 
$$
\int_0^T \lambda_t^2|u_t|_{L_2}^2\,dt
\leq
\sup_{t\in[0,T]}|u_t|_{L_2}^2\int_0^T \lambda_t^2\,dt<\infty,
\quad \int_0^T|Du_t|^2_{L_2}\,dt<\infty
$$
and 
$$
\int_0^T|\frF_t|^2_{L_2}\,dt<\infty, 
$$
we get that 
\begin{equation}
                                                                  \label{Q_def}
\int_0^T |v_t^\ast|_{V^\ast}^2\,dt<\infty\,\,\text{(a.s.).}
\end{equation} 
Using the definition of admissible solution and Assumption 2.1(i),
we have 
$$
|-u_t^BD_j u_t+f_t(u_t,Du_t)|_{L_2}
\leq
\bar u_t|Du_t|_{L_2}+\kappa_t|u_t|_{L_2}+\chi_t|Du_t|_{L_2}
+|\mathfrak G_t|_{L_2},
$$
whence
\begin{equation*}
\int_0^T \kappa_t|u_t|_{L_2}\,dt
\le
\sup_{t\in[0,T]}|u_t|_{L_2}
\int_0^T \kappa_t\,dt<\infty\,\,\text{(a.s.)}.
\end{equation*}
Moreover, by Cauchy--Schwarz,
$$
\int_0^T(\bar u_t|Du_t|_{L_2}+\chi_t|Du_t|_{L_2})\,dt
<\infty \,\,\text{(a.s.)}.
$$
Using \eqref{closure_assumption}, Assumptions \ref{assumption1} and 
\ref{assumption2}(i),
$$
|\gamma_t^{ki}D_i q_t^k|_{L_2}
=
|\gamma_t^{ki}(RM_t^k(u_t))^i|_{L_2}
\le
\vartheta_t
\Bigl(
\sum_{k\in\bZ_0} |RM_t^k(u_t)|_{L_2}^2
\Bigr)^{1/2}
$$
$$
\leq
\vartheta_t
\Bigl(
\sum_{k\in\bZ_0} |M_t^k(u_t)|_{L_2}^2
\Bigr)^{1/2}
\leq
N\bigl(
\vartheta_t|Du_t|_{L_2}
+\vartheta_t\lambda_t|u_t|_{L_2}
+\vartheta_t|\mathfrak{F}_t|_{L_2}\big). 
$$
Hence taking account that 
\begin{equation*}
\int_0^T
\vartheta_t\lambda_t|u_t|_{L_2}\,dt
\le
\sup_{t\in[0,T]}|u_t|_{L_2}
\int_0^T(\vartheta_t^2+\lambda_t^2)\,dt
<\infty\,\,\text{(a.s.)},
\end{equation*}
and 
$$
\int_0^T\vartheta_t|Du_t|_{L_2}
+
\vartheta_t|\mathfrak{F}_t|_{L_2}\,dt
\leq 
\int_0^T\vartheta_t^2+|Du_t|^2_{L_2}+|\mathfrak{F}_t|^2_{L_2}\,dt
<\infty\,\text{(a.s.)}, 
$$
we get that 
\[
F_t
=
S\bigl(
-u_t^BD_j u_t
+f_t(u_t,Du_t)
+\gamma_t^{ki}D_i q_t^k
\bigr)
\]
is an $H$-valued optional process satisfying
\begin{equation}
                                                              \label{F}
\int_0^T |F_t|_H\,dt<\infty\,\,\text{(a.s.)}.
\end{equation}
Furthermore, by Assumptions \ref{assumption1}(i), \ref{assumption2}(i) 
and Remark \ref{remark sigma},
$$
\sum_k
|\sigma_t^{ik}D_i u_t+h_t^k(u_t)|_{L_2}^2
\leq
N\bigl(
|Du_t|_{L_2}^2+\lambda_t^2|u_t|_{L_2}^2
+|\mathfrak{F}_t|_{L_2}^2 
\big)
$$
with $N=N(\delta)$. Hence, 
$$
G_t=(G_t^k)_{k\in\bZ_0}=(S(\sigma_t^{ik}D_i u_t+h_t^k(u_t)))_{k\in\bZ_0}
$$
is an $\ell_2(H)$-valued optional process satisfying
\begin{equation}
                                            \label{G}
\int_0^T |G_t|_{\ell_2(H)}^2\,dt<\infty\,\,\text{(a.s.)}.
\end{equation}
Thus by virtue of \eqref{Q_def}, \eqref{F} and 
\eqref{G} we can apply Theorem \ref{eq:1} to get
\begin{equation}
                              \label{ito_sns}
|u_t|_H^2=|u_0|_H^2+\int_0^tI_s+(u_{s(u_s)},u_s)\,ds
+2 \int^{t}_{0} (u_{s-},d\frm_t)+[\frm]_t,
\end{equation}
where 
$
I_t:=-2\big(a_t^{ij}D_i u_t+2\frf_t^i(u_t),D_j u_t\big)
+2\big(f_t(u_t,Du_t)+2\gamma_t^{kj}D_j q_t^k,u_t\big), 
$
and 
$$
\frm_t=\int_0^tS\big(\sigma_s^{ik}D_i u_s+h_s^k(u_s)\big)\,dm_s^k. 
$$
By Proposition \ref{proposition antisymmetry},
$(u_{t(u_t)},u_t)=0$
($P\otimes dt$-a.e.). 
Unless otherwise stated, $N$ and $N'$ below 
denote constants depending only on $\delta$, 
and their values may change from one occurrence to another. 
Note that 
\begin{equation}
                                                 \label{DM}
<\frm>_t=\int_0^tJ_s\,ds\quad  
\text{with}
\quad J_t=\sum_k|S\big(\sigma_t^{ik}D_i u_t+h_t^k(u_t)\big)|_H^2. 
\end{equation}
Using Assumptions \ref{assumption1},  
\ref{assumption2}(i), taking into account 
\eqref{closure_assumption} and applying Young's inequality, 
we get
$$
I_t\leq
-2(a_t^{ij}D_i u_t,D_j u_t)
+
2\lambda_t|u_t|_{L_2}|Du_t|_{L_2}
+
2|\frF_t|_{L_2}|Du_t|_{L_2}
$$
$$
+
2\kappa_t|u_t|_{L_2}^2
+
2\chi_t|Du_t|_{L_2}|u_t|_{L_2}
+
2|\frG_t|_{L_2}|u_t|_{L_2}
+
2\vartheta_t|Du_t|_{L_2}|u_t|_{L_2} 
$$
\begin{equation}
                                              \label{I_estimate}
\leq-2(a_t^{ij}D_i u_t,D_j u_t)
+
\frac{\delta}{2}|Du_t|_{L_2}^2
+
N(\lambda_t^2+\chi_t^2+\vartheta_t^2+\kappa_t)
|u_t|_{L_2}^2
\end{equation}
$$
\quad
+
N|\frF_t|_{L_2}^2
+
2|\frG_t|_{L_2}|u_t|_{L_2}.   
$$
 Taking into account that the operator norm of $S$ is 1, 
expanding the square and applying Young's inequality we have 
\begin{equation}
                                                   \label{J_estimate}
J_t\leq(\sigma_t^{ik}\sigma_t^{jk}D_i u_t,D_j u_t)
+
\frac{\delta}{2}|Du_t|_{L_2}^2+N\lambda_t^2|u_t|_{L_2}^2
+
N|\mathfrak{ F}_t|_{L_2}^2. 
\end{equation}
Let
$$
Y_t:=e^{-2\varphi_t}|u_t|_{\mathcal H}^2,
\quad
\varphi_t:=\int_0^t\alpha_s\,ds 
\quad 
\tilde{\frm}_t:=\int^{t}_{0}e^{-2\varphi_s}(u_s,d\frm_s) .
$$
Then by It\^o's formula from \eqref{ito_sns} we get   
\begin{equation}
                                       \label{application_ito}
dY_t
=
e^{-2\varphi_t}
\bigl(I_t-2\alpha_t|u_t|_{\mathcal H}^2\bigr)\,dt
+
e^{-2\varphi_t}\,d[\frm]_t
+
2\,d\tilde{\frm}_t .
\end{equation}

From \eqref{I_estimate} and \eqref{J_estimate}, 
taking into account Assumption \ref{assumption1} we get 
$$
I_s+J_s\leq
-\delta |Du_s|_{L_2}^2
+
N_0(\lambda_s^2+\chi_s^2+\vartheta_s^2+\kappa_s)|u_s|_{\mathcal H}^2
$$
$$
+
N|\mathfrak{ F}_s|_{L_2}^2
+
2|\mathfrak{ G}_s|_{L_2}|u_s|_{\mathcal H}   
$$ 
with constants $N_0=N_0(\delta)$ and $N=N(\delta)$. Choosing
$$
\alpha_s=
N'(\lambda_s^2+\chi_s^2+\vartheta_s^2+\kappa_s)+\mu_s
$$
with $N'\geq N_0$, we get
$$
\E e^{-2\varphi_{T\wedge\tau}}
|u_{T\wedge\tau}|_{\mathcal H}^2
+
\delta\,\E\int_0^{T\wedge\tau}
e^{-2\varphi_s}|Du_s|_{L_2}^2\,ds
$$
$$
+\E\int_0^{T\wedge\tau}
\alpha_s e^{-2\varphi_s}|u_s|_{\mathcal H}^2\,ds
\leq
\E|u_0|_{\mathcal H}^2
+
N\E\int_0^{T\wedge\tau}
e^{-2\varphi_s}|\mathfrak {F}_s|_{L_2}^2\,ds
$$
\begin{equation}
                                                          \label{norm_estimate}
+2\E\int_0^{T\wedge\tau}
e^{-2\varphi_s}
|\mathfrak{G}_s|_{L_2}|u_s|_{\mathcal H}\,ds<\infty .
\end{equation}
Using this and estimate \eqref{I_estimate}, 
from \eqref{application_ito} we have 
\begin{equation}
                                    \label{sup_norm_estimate_1}
\begin{aligned}
E\sup_{t\le T\wedge\tau}Y_t
&\le
E|u_0|_{\mathcal H}^2
+
NE\int_0^{T\wedge\tau}
e^{-2\varphi_s}|\mathfrak {F}_s|_{L_2}^2\,ds  \\
&\quad
+
2E\int_0^{T\wedge\tau}
e^{-2\varphi_s}|\mathfrak {G}_s|_{L_2}|u_s|_{\mathcal H}\,ds  \\
&\quad
+
E\int_0^{T\wedge\tau}
e^{-2\varphi_s}\,d[\frm]_s
+
2E\sup_{t\le T\wedge\tau}
\left|\tilde\frm_s\right|.
\end{aligned}
\end{equation}
Using \eqref{J_estimate}, 
Remark \ref{remark sigma} and \eqref{norm_estimate} 
we get 
\begin{equation}\label{weighted_qv_estimate}
\begin{split}
\E\int_0^{T\wedge\tau}
e^{-2\varphi_s}\,d[\frm]_s
\leq&
N E|u_0|_{\mathcal H}^2
+
 N\E\int_0^{T\wedge\tau}
e^{-2\varphi_s}|\mathfrak {F}_s|_{L_2}^2\,ds
\\&+
N\E\int_0^{T\wedge\tau}
e^{-2\varphi_s}|\mathfrak {G}_s|_{L_2}|u_s|_{\mathcal H}\,ds.  
\end{split}
\end{equation}
By the  Davis inequality have  
$$
E\sup_{t\le T\wedge\tau}
\left|\tilde\frm_s\right|
\leq
3\E\left(
\int_0^{T\wedge\tau}
e^{-4\varphi_s}J_s|u_s|_{\mathcal H}^2\,ds
\right)^{1/2}. 
$$
Therefore
\begin{align}
\E\sup_{t\le T\wedge\tau}Y_t
&\leq
N\E|u_0|_{\mathcal H}^2
+
 N\E\int_0^{T\wedge\tau}
e^{-2\varphi_s}|\mathfrak {F}_s|_{L_2}^2\,ds  \nonumber\\
&\quad
+
 N\E\int_0^{T\wedge\tau}
e^{-2\varphi_s}|\mathfrak {G}_s|_{L_2}|u_s|_{\mathcal H}\,ds  \nonumber \\
&\quad
+
N_1\E\left(
\int_0^{T\wedge\tau}
e^{-4\varphi_s}J_s|u_s|_{\mathcal H}^2\,ds                               \nonumber
\right)^{1/2}<\infty. 
\end{align}
By Hölder and Young inequalities,
\begin{equation}
                                                                  \label{int_g_estimate}
\begin{aligned}
 E\int_0^{T\wedge\tau}
e^{-2\varphi_s}|\mathfrak {G}_s|_{L_2}|u_s|_{\mathcal H}\,ds
&\le
\frac14 E\sup_{t\le T\wedge\tau}Y_t  \\
&\
+
N \E\left(
\int_0^{T\wedge\tau}
e^{-\varphi_s}|\mathfrak {G}_s|_{L_2}\,ds
\right)^2<\infty .
\end{aligned}
\end{equation} Next, by Davis inequality,
$$
E\sup_{t\le T\wedge\tau}
\left|\tilde \frm_s\right|
\leq
3\E\left(
\int_0^{T\wedge\tau}
e^{-4\varphi_s}J_s|u_s|_{\mathcal H}^2\,ds
\right)^{1/2}.
$$
Hence 
$$
2\E\sup_{t\le T\wedge\tau}
\left|\tilde\frm_s\right|
\leq
\frac18\ E\sup_{t\le T\wedge\tau}Y_t
+
N \E\int_0^{T\wedge\tau}e^{-2\varphi_s}J_s\,ds<\infty .
$$
Using \eqref{J_estimate} together with \eqref{norm_estimate}, we get
\begin{equation}
                       \label{int_js_estimate}
\begin{aligned}
 \E\int_0^{T\wedge\tau}e^{-2\varphi_s}J_s\,ds
&\leq
N \E|u_0|_{\mathcal H}^2
+
N \E\int_0^{T\wedge\tau}
e^{-2\varphi_s}|\mathfrak {F}_s|_{L_2}^2\,ds
\\&+
\frac{1}{16}\E\sup_{t\le T\wedge\tau}Y_t
+N\E\left(
\int_0^{T\wedge\tau}
e^{-\varphi_s}|\mathfrak {G}_s|_{L_2}\,ds\right)^2.  
\end{aligned}
\end{equation}
Combining the estimates 
\eqref{sup_norm_estimate_1}--\eqref{int_js_estimate} we 
get 
$$
\E\sup_{t\le T\wedge\tau}Y_t
\leq
N \E|u_0|_{\mathcal H}^2
+
N \E\int_0^{T\wedge\tau}
e^{-2\varphi_s}|\mathfrak {F}_s|_{L_2}^2\,ds
$$
$$
\quad
+
N \E\left(
\int_0^{T\wedge\tau}
e^{-\varphi_s}|\mathfrak {G}_s|_{L_2}\,ds
\right)^2
+
\frac12\E\sup_{t\le T\wedge\tau}Y_t<\infty, 
$$
which yields  
$$
\E\sup_{t\le T\wedge\tau}
e^{-2\varphi_t}|u_t|_{\mathcal H}^2
\le
N\E|u_0|_{\mathcal H}^2
+
N \E\int_0^{T}
e^{-2\varphi_s}|\mathfrak {F}_s|_{L_2}^2\,ds
$$
\begin{equation}
                                 \label{final_sup_estimate}
+N \E\left(\int_0^{T}
e^{-\varphi_s}|\mathfrak {G}_s|_{L_2}\,ds
\right)^2.
\end{equation}
Using this estimate 
in \eqref{int_g_estimate}, from \eqref{norm_estimate} 
we obtain 
$$
\E Y_{T\wedge\tau}+\E\int_0^{T\wedge\tau}
e^{-2\varphi_s}|Du_s|_{L_2}^2\,ds
$$
$$
+
\E\int_0^{T\wedge\tau}\alpha_s e^{-2\varphi_s}
|u_s|_{\mathcal H}^2\,ds
\leq
N\E|u_0|_{\mathcal H}^2+N\E\int_0^{T}
e^{-2\varphi_s}
|\frF_s|_{L_2}^2\,ds
$$
\begin{equation}
                               \label{final_norm_estimate}
+
N\E
\left(
\int_0^{T}
e^{-\varphi_s}
|\mathfrak {G}_s|_{L_2}\,ds
\right)^2.
\end{equation}
We can finish now the proof of the theorem by using Fatou's lemma 
when taking $n\to\infty$ for $\tau:=\tau_n\uparrow\infty$ 
here and in \eqref{final_sup_estimate}. 
\end{proof}

\medskip
\begin{proof}[Proof of Theorem \ref{theorem3}]
Without loss of generality we may assume that 
$\E|u_0-v_0|_{H}^2<\infty$. Let $w:=u-v$. 
Then for each $\varphi\in \cD$ we have 
$$
(\fru_t,\varphi)
=-\int_0^t\big[(a^{ij}_sD_j\fru_s+\frf_s^{i}(u_s)-\frf^{i}_s(v_s),D_i\varphi)
+(u_{s(u_s)}-v_{s(v_s)}, \varphi)\big]\,ds
$$
$$
+\int_0^t
(f_s(u_s,Du_s)-f_s(v_s,Dv_s),\varphi)\,ds
$$
$$
+\int_0^t
\Big(\gamma_s^{ki}\big(\cR\big(M^k_s(u_s)-M^k_s(v_s)\big)\big)^i,\varphi\Big)
\,ds
$$
$$
+\int_0^t(\sigma_s^{ik}D_i\fru_s+h_s^k(u_s)-h_s^k(v_s),\varphi)\,dm^k_s
$$
for $P\otimes dt$-almost every $(\omega,t)\in [0,T]$, 
where $M_s^k(\cdot)$ is defined in \eqref{stochastic_integrand}. 
Setting $V:=\cV$, $H:=\cH$ and arguing 
as in the proof of Theorem \ref{theorem2}, 
we obtain an optional 
$V^*$-valued process $v^*$ such that
$$
\int_0^T |v_s^*|_{V^*}^2\,ds<\infty\,\text{(a.s.)}, 
$$
and for all $\varphi\in V$,
$$
\begin{aligned}
\langle \varphi,v_s^*\rangle
&=
-
\big(
a_s^{ij}D_j u_s
+
\frf_s^i(u_s)-\frf_s^i(v_s),
D_i\varphi
\big)
\\
&\quad
-
\big(
u_s^{Mi}D_i u_s-v_s^{Mi}D_i v_s,
\varphi
\big)
\end{aligned}
$$
for all $(\omega,t)$.
Moreover, define
$$
\begin{aligned}
F_s:={}&S\Big(v_s^{Bi}D_i v_s-u_s^{Bi}D_i u_s
+f_s(u_s,Du_s)-f_s(v_s,Dv_s)\Big)\\
&\quad
+
S\Big(\gamma_s^{ki}\big(R(M_s^k(u_s)-M_s^k(v_s))\big)^i\Big),
\quad t\in[0,T],
\end{aligned}
$$
and let 
$$
G_s=(G_s^k)_{k\in\bZ}
$$
be given by
$$
G_s^k
:=
S\big(
\sigma_s^{ik}D_i u_s
+
h_s^k(u_s)-h_s^k(v_s)
\big).
$$
Then $F_s$ is $\mathcal H$-valued and $G_s$ is
$\ell_2(\mathcal H)$-valued optional process, such that 
$$
\int_0^T|G_t|_{\ell_2(\mathcal H)}^2
+
|F_t|_{\mathcal H}\,dt<\infty\,\text{(a.s.).}
$$
Therefore we can use Theorem 3.1 to get that $w=(w_t)_{t\in[0,T]}$
admits an $\mathcal H$-valued c\`adl\`ag $P\otimes dt$-modification,
denoted also by $w$, such that almost surely
\begin{align}
|w_t|_{L_2}^2=&|w_0|^2_{\cH}-2\int_0^t
\big(a_s^{ij}D_j w_s+f_s^i(u_s)-f_s^i(v_s),D_j w_s\big)\,ds  \nonumber\\
&
-2\int_0^tb(u_s,u_s,w_s)-b(v_s,v_s,w_s)\,ds                         \nonumber\\
&+
2\int_0^t\big(f(u_s,Du_s)-f(v_s,Dv_s),w_s\big)\,ds                  \nonumber\\
&+2\int_0^t\Big(\gamma_s^{ki}
\big(R(M_s^k(u_s)-M_s^k(v_s))\big)^i,w_s\Big)\,ds                \nonumber\\
&+2\int_0^t(w_s,d\frm_s)+[\frm]_t                                         \label{1.12.6}
\end{align}
for all $t\in[0,T]$, where
$$
\frm_t=\int_0^tS\big(\sigma_s^{ik}D_i w_{s}+h_s^k(u_s)-h_s^k(v_s)\big)\,dm_s^k, 
\quad t\in[0,T], 
$$
and $[\frm]$ is the quadratic variation process of the locally square 
integrable martingale $\frm$.  
Notice that
$$
b(u_s,u_s,w_s)-b(v_s,v_s,w_s)=b(v_s,w_s,w_s)+b(w_s,u_s,w_s)
$$
\begin{equation}
                                                          \label{identity}
=b(w_s,u_s,w_s)=-b(w_s,w_s,u_s),   
\end{equation}
by Proposition \ref{proposition antisymmetry}. 
By Proposition  \ref{proposition1}, using Young's inequality 
we get 
$$
2|b(w_s,w_s,u_s)|\leq \hat N \hat u
\big(|Dw_s|_{L_2}+|w_s|_{L_2}\big)
|Dw_s|_{L_2}
$$
\begin{equation}
                                                            \label{1.16.6}
+2\bar u_s|Dw_s|_{L_2}|w_s|_{L_2} 
\end{equation}
$$
\leq \hat N\hat u|Dw_s|_{L_2}^2+\tfrac{\delta}{2}|Dw_s|_{L_2}^2
+
N_0(\hat u^{\,2}+\bar u_s^{\,2})|w_s|_{L_2}^2
$$
with constants $\hat N=\hat N(d,r)$ and $N_0=N_0(d,r,\delta)$.
Using Assumption \ref{assumption2}(ii) 
by the Cauchy-Schwartz
and Young inequalities we obtain, 
$$
\begin{aligned}
&
-2\big(a_s^{ij}D_j w_s+\frf_s^i(u_s)-\frf_s^i(v_s),D_i w_s\big)\\
&\leq
-2(a_s^{ij}D_j w_s,D_i w_s)+2\lambda_s|w_s|_{L_2}|Dw_s|_{L_2}\\
&\leq -2(a_s^{ij}D_j w_s,D_i w_s)+\tfrac{\delta}{4}|Dw_s|_{L_2}^2
+N\lambda_s^2|w_s|_{L_2}^2,
\end{aligned}
$$
with a constant $N=N(\delta)$.
Due to \eqref{lipschitz2}, we have 
$$
2\big(f(u_s,Du_s)-f(v_s,Dv_s),w_s\big)
$$
$$
\leq 2(\kappa_s|w_s|_{L_2}+\chi_s|Dw_s|_{L_2})|w_s|_{L_2}\\
\leq\tfrac{\delta}{8}|Dw_s|_{L_2}^2+N(\kappa_s+\chi_s^2)|w_s|_{L_2}^2
$$
with $N=N(\delta)$.
By condition \eqref{g_uperbound} we have  
\begin{align*}
K_s&:=\Big(
\gamma_s^{ki}\big(R(M_s^k(u_s)-M_s^k(v_s))\big)^i,u_s
\Big)                                                                           \\
&\leq\vartheta_s\left|\,
\big|R(M_s(u_s)-M_s(v_s))\big|_{\ell_2(\mathbb R^d)}\,|u_s|
\right|_{L_1}                                                              \\
&\leq\vartheta_s\left|\big|R(M_s(u_s)-M_s(v_s))\big|_{\ell_2(\mathbb R^d)}
\right|_{L_2}|u_s|_{L_2}.
\end{align*}
Since the operator norm of $R$ is $1$,
$$
\left|\big|R(M_s(u_s)-M_s(v_s))
\big|_{\ell_2(\mathbb R^d)}\right|_{L_2}^2
=\sum_{k=1}^{\infty}\left|R\big(M_s^k(u_s)-M_s^k(v_s)\big)\right|_{L_2}^2
$$
$$
\leq\sum_{k\in\bZ_0}\left|M_s^k(u_s)-M_s^k(v_s)\right|_{L_2}^2
=\big||M_s(u_s)-M_s(v_s)|_{\ell_2(\mathbb R^d)}\big|_{L_2}^2
$$
$$
\leq\left|\,\big|\sigma_s^i D_i w_s\big|_{\mathbb R^d}\big|_{\ell_2}
+\big|h_s(u_s)-h_s(v_s)\big|_{\ell_2(\mathbb R^d)}\right|_{L_2}^2 .
$$
Hence, 
by Remark \ref{remark sigma} and condition \eqref{lipschitz1},
$$
\begin{aligned}
|K_s|&\leq\vartheta_s
N\big(|Dw_s|_{L_2}+\lambda_s|w_s|_{L_2}\big)|w_s|_{L_2}\\
&\leq\tfrac{\delta}{16}|Dw_s|_{L_2}^2+
N'(\vartheta_s^2+\lambda_s^2)|w_s|_{L_2}^2,
\end{aligned}
$$
where $N,N'$ depend only on $d$ and $\delta$. 
Since the operator norm of $S$ is $1$, due to Assumption \ref{assumption1} and
condition \eqref{lipschitz1} we have 
$$
J_s:=\sum_k\Big|S(\sigma_s^{ik}D_i w_s+h_s^k(u_s)-h_s^k(v_s))
\Big|_{\mathcal H}^2
$$
$$
\leq\sum_k |\sigma_s^{ik}D_i w_s+h_s^k(u_s)-h_s^k(v_s)|_{\mathcal H}^2
$$
\begin{equation}
                                                            \label{[n]}
\leq (\sigma_s^{ik}\sigma_s^{jk}D_i w_s,D_j w_s)
+\tfrac{\delta}{32}|Dw_s|_{L_2}^2
+N_1\lambda_s^2|w_s|_{L_2}^2
\end{equation}
with a constant $N_1=N_1(d,\delta)$. Thus by the above estimates, from 
\eqref{1.12.6}
we obtain 
\begin{align}
d|w_t|^2_{\cH}
\leq& -2(a_s^{ij}D_j w_s,D_i w_s)+(\hat N\hat u+{\delta})|Dw_s|_{L_2}^2    \nonumber\\
&+N_1\beta|w_s|_{L_2}^2+2(w_t,d\frm_s)+d[\frm]_t                                     \label{1.13.6}
\end{align}
with a constant $N_2=N_2(d,r,\delta)$ and stochastic process 
$$
\beta_s=\hat u^2+\bar u_s^2+\lambda^2_s+\vartheta^2+\chi^2_s+\kappa_s. 
$$
Set 
$$
\phi_t=N'\int_0^t\beta_s\,ds' 
\quad\text{and}
\quad Y_t=e^{-\phi_t}|w_t|^2_{\cH}, 
\quad
 \tilde\frm_t=\int_0^te^{-\phi_s}(w_s,d\frm_s), 
$$
where $N'$ is a constant, specified later. Then from \eqref{1.13.6}
we get 
$$
dY_t\leq e^{-\phi_t}\big(-2(a_s^{ij}D_j w_t,D_i w_t)
+(\hat N\hat u+\tfrac{7}{8}{\delta})|Dw_t|_{L_2}^2\big)\,dt
$$
\begin{equation}
                                             \label{2.13.6}
+(N_2-N')\beta_t|w_t|^2_{\cH}\,dt +d [\tilde m]_t+2d\tilde m_t.                                                                                                                                                     
\end{equation}
Set
$$
\tau_n=\inf\{t\in[0,T], \eta_t\geq n\}\wedge\tilde\tau_n 
$$
\text{for integers $n\geq1$}, where 
$$
\eta_t=\int_0^t|Dw_s|^2_{L_2}
+(\bar u_s^2+\lambda^2_s+\vartheta^2+\chi^2_s+\kappa_s)|w_s|^2\,ds,  
$$
and $\tilde\tau_n\uparrow\infty$ is a localising 
sequence of stopping times for the local martingale 
$\tilde\frm$. Since $\eta$ is a continuous 
$\cF_t$-adapted process, $\tau_n$ is a stopping time for every $n$. 
Clearly, for $\tau:=\tau_n$ we have $\E\tilde\frm_{t\wedge\tau}=0$  
and due to \eqref{[n]} we have  
$$
\E[\tilde\frm]_{t\wedge\tau}=\E\langle\tilde\frm\rangle_{t\wedge\tau}
=\E\int_0^{t\wedge\tau}e^{-\phi_s}J_s\,ds
$$
$$
\leq \E\int_0^{t\wedge\tau}e^{-\phi_s}
\big((\sigma_s^{ik}\sigma_s^{jk}D_i w_s,D_j w_s)
+\tfrac{\delta}{32}|Dw_s|_{L_2}^2
+N_1\lambda_s^2|w_s|_{L_2}^2\big)\,ds<\infty
$$
for every $t\in[0,T]$. 
Hence, using that 
$$
-\big((2a_s^{ij}-\sigma_s^{ik}\sigma_s^{jk})D_j w_s,D_i w_s\big)
\leq
-2\delta|Dw_s|_{L_2}^2, 
$$
from \eqref{2.13.6} we get 
$$
\E Y_{t\wedge\tau}
\leq 
\E|w_0|_{\mathcal H}^2
$$
$$
+\E\int_0^{t\wedge\tau}
e^{-\phi_s}(\hat N\hat u-\delta)|Dw_s|_{L_2}^2
+(N_1+N_2-N')\beta|w_s|^2_{\cH}\,ds
$$
since $\E\tilde\frm_{\tau\wedge t}=0$. Hence, assuming that 
$\hat u\leq \delta/\hat u$ and choosing $N'\geq N_1+N_2$, 
we obtain 
$$
\E e^{-\varphi_{\tau_n\wedge t}}|w_t|^2_{\cH}
\leq \E |w_0|^2\quad\text{for $t\in[0,T]$} 
$$
for integers $n\geq1$. Letting here $n\to\infty$ and using Fatou's lemma 
we finish the proof of the theorem.  
\end{proof}

\medskip
\begin{proof}[Proof of Theorem \ref{theorem4}]
Setting $w=u-v$, we proceed as in the proof of Theorem \ref{theorem3} and apply 
Theorem \ref{theorem0} to obtain the `energy equality' 
$$
|w_t|_{L_2}^2=|w_0|_{\cH}^2-2\int_0^t
\big(a_s^{ij}D_j w_s, D_iw_s\big)+b(u_s,u_s,w_s)-b(v_s,v_s,w_s)\,ds
$$
$$
-2\int_0^t
(\frf^i_s(u_s)-\frf_s^i(v_s),D_iw_s)\,ds+2\int_0^t\big(f_s(u_s,Du_s)-f_s(v_s,Dv_s),w_s\big)\,ds.                                       
$$
From \eqref{identity} and \eqref{1.16.6}, using the Poincar\'e inequality \eqref{Poincare}
to control $|w|_{L_2}$ by $|Dw|_{L_2}$, we get
$$
2|b(u_s,u_s,w_s)-b(v_s,v_s,w_s)|
\leq N\tilde u|Dw_s|_{L_2}^2\,\,\text{($dt$-a.e. $s\in[0,T]$)} 
$$
with a constant $N=N(d,r,\frc)$ and $\tilde u=\hat u+\esssup_{s\in[0,T]}\bar u_s$.  
By conditions  \eqref{lipschitz1a} and \eqref{lipschitz2a} 
in Assumption \ref{assumption2a}, and using the Poincar\'e inequality,  
we obtain 
$$
2|(f_s^i(u_s)-f_s^i(v_s),D_i w_s)|
\leq 2\frc L|Dw_s|^2_{L_2}
$$
and 
$$
2|(f(u_s,Du_s)-f(v_s,Dv_s),w_s)|
\leq 2(\frc^2+\frc)L|Dw_s|^2_{L_2}. 
$$
Hence, using also the ellipticity condition from Assumption \ref{assumption1a},  
$$
(a_s^{ij}D_jw_s,D_iw_s)\geq \delta |Dw_s|^2\,\, \text{($ds$-a.e. $s\in[0,T]$)},   
$$
we obtain 
$$
|w_t|^2_{\cH}+\int_0^t|Dw_s|_{L_2}\leq |w_0|_{\cH}^2
+\int_0^2\big(2N(\tilde u+L)-2\delta\big)|Dw_s|^2\,ds,\quad \text{for $t\in[0,T]$}  
$$
with a constant $N=N(d,r,\frc)$. 
Consequently, if 
$$
\tilde u+L<\delta/N, 
$$ 
and we set 
$$
\gamma:=\delta -N(\tilde u+L)>0, 
$$
then 
$$
|w_t|^2_{\cH}\leq |w_0|_{\cH}^2
-\int_0^t 2\gamma |Dw_s|^2_{L_2}\,ds,   
\quad t\in[0,T].
$$
In particular, 
$$
\int_0^T |Dw_s|^2_{L_2}\,ds\leq \tfrac{1}{2\gamma}|w_0|^2,  
\quad t\in[0,T].  
$$
Furthermore, using the Poincar\'e inequality in the previous estimate yields 
$$
|w_t|^2_{\cH}\leq |w_0|_{\cH}^2
-2\gamma\frc^{-2}\int_0^t  |Dw_s|^2_{L_2}\,ds.  
\quad \text{for $t\in[0,T]$}, 
$$
and an application of Gronwall's lemma therefore gives 
$$
|w_t|_{\cH}\leq |w_0|_{\cH}e^{-\frc^{-2}\gamma t}, \quad t\in [0,T].    
$$
This completes the proof. 
\end{proof}

\mysection{Applications to Stochastic Navier-Stokes equations 
driven by wiener processes and 
Poisson martingale measures}

We now consider the following stochastic Navier-Stokes equations 
$$
du_t=\Big(D_i\big(a_t^{ij}D_j u_t + \frf_t^i(u_t))
+ f_t(u_t,Du_t)- u_{t(u_t)}-\nabla p_t
+ \gamma_t^{ki}D_i q_t^k\Big)\,dt
$$
\begin{equation}
                                                                \label{SNS5}
+\Big(\sigma_t^{ik}D_i u_t+ h_t^k(u_t)-\nabla q_t^k\Big)\,dw_t^k
+\int_{Z}\frh_t(u_{t-},z)\,\tilde\pi(dz,dt), 
\end{equation}
together with the incompressibility condition 
 \begin{equation}
                                                   \label{div5}
\operatorname{div}u_t=0
\end{equation}
on $\Omega\times[0,T]\times\cR$, subject to the 
the boundary and initial conditions 
\begin{equation}
                                                         \label{bin5}
u_t|_{\partial \cR}=0, \quad \text{for $t\in[0,T]$\quad 
and
\quad 
$u_t(x)|_{t=0}=u_0(x)$,
\quad$ x\in \cR$}.   
\end{equation}
Here  the coefficients $a=(a^{ij})$, $\gamma=(\gamma^{ki})$, 
$\sigma=(\sigma^{ik})$ and the functions $\frf=(\frf^i)$, $f$ are the same 
as those appearing in \eqref{SNS2}. 
The only difference is that, in \eqref{SNS5}, 
the summation over the repeated index $k$ is understood to run only over $k\geq1$.
The function $\frh=\frh_t(x,u,z)$ is an $\bR^d$-valued 
$\cO\otimes\cB(\cR\times\bR^d)\otimes\cZ$-measurable function 
of $\omega\in\Omega$, $t\geq0$, $x\in\cR$, $u\in\bR^d$ and $z\in Z$, 
satisfying the following assumption.   

\begin{assumption} 
                                                   \label{assumption2c}
There exist a nonnegative optional process $\tilde \lambda=(\tilde\lambda_t)_{t\geq0}$
and a nonnegative $\cO\otimes\cB(\cR)$-measurable function
$\frH=\frH_t(x)$ such that
$$
\int_0^T\tilde\lambda_t^2\,dt<\infty,
\quad
\int_0^T|\frH_t|_{L_2}^2\,dt<\infty
\quad \text{(a.s.)},
$$
and the following conditions hold: 
\begin{enumerate}
\item[(i)]
For all
$\omega\in\Omega$, $t\geq0$, $x\in\cR$ and $u\in\mathbb R^d$ we have  
$$
\int_{Z}|\frh_t(x,u,z)|^2\,\frn(dz)\leq \tilde\lambda^2_t|u|^2+\frH^2_t(x). 
$$
\item[(ii)]
For all $\omega\in\Omega$, $t\geq0$ we have  
$x\in\cR$, $u_1, u_2\in\bR^d$,  
$$
\int_{Z}|\frh_t(x,u_1,z)-\frh_t(x,u_2,z)|^2\,\frn(dz)
\leq \tilde\lambda^2_t |u_1-u_2|^2.
$$
\end{enumerate}
\end{assumption}

The notion of solution is defined analogously to that introduced above. 
\begin{definition}
                                                            \label{definition2}
A $\cV$-valued optional process
$u=(u_t)_{t\in[0,T]}$
is called a solution of \eqref{SNS5}--\eqref{div5}--\eqref{bin5} on $[0,T]$ if
$u\in L_2([0,T],\cV)\cap L_\infty([0,T],\cH)$ (a.s.),  
and for every $\varphi\in\cV$, 
\begin{align}
(u_t,\varphi)=&(u_0,\varphi)-
\int_0^t\Big[\big(a_s^{ij}D_j u_s+\frf_s^i(u_s),D_i \varphi\big)
+(u_{s(u_s)},\varphi)\Big]\,ds                                                                   \nonumber\\
&+\int_0^t\big(\gamma_s^{ki}D_i q_s^k+f_s(u_s,Du_s),\varphi\big)\,ds  \nonumber\\
&+\int_0^t\big(\sigma_s^{ik}D_i u_s+h_s^k(u_s),\varphi\big)\,dw_s^k    \nonumber\\
&+\int_0^{t}\int_{Z}\big(\frh_s(u_{s-},z),\varphi\big)\,\tilde\pi(ds,dz)                       \label{1.19.6}
\end{align}
holds for $\mathbb P\otimes dt$-almost every
$(\omega,t)\in\Omega\times[0,T]$. In addition 
equation \eqref{pressure_assumption} is assumed to hold. 

As before, we call
$u=(u_t)_{t\in[0,T]}$
an $\cH$-solution if it is an $\cH$-valued 
$\cF_t$-adapted c\`adl\`ag process such that
$u\in L_2([0,T],\cV)\ \text{(a.s.)}$, 
almost surely \eqref{1.19.6} holds for all
$t\in[0,T],\,\varphi\in\mathcal V$, 
and equation \eqref{pressure_assumption} is satisfied. 
Recall that a solution is called an admissible solution if it is 
an admissible function. 
\end{definition} 

Theorems \ref{theorem2} and \ref{theorem3} yield 
the following result. 

\begin{theorem}
                                            \label{theorem5}
Let Assumptions \ref{assumption1}, 
\ref{assumption2}, \ref{assumption3}  and \ref{assumption2c} hold.  
Then every admissible solution admits an $\cH$-valued c\`adl\`ag modification. 
Moreover,  there exists a constant 
$N=N(d,r)>0$
such that if $u$ is an admissible solution 
to \eqref{SNS5}--\eqref{div5}--\eqref{bin5} satisfying 
$\hat u\leq \delta/N$, then 
$$
\E e^{-\psi_{t}}|u_t-v_t|^2_{\cH}
\leq \E |u_0-v_0|_{\cH}^2\quad\text{for $t\in[0,T]$} 
$$
for any admissible solution $v$ of \eqref{SNS5}--\eqref{div5}--\eqref{bin5} 
on $[0,T]$ with $\cH$-valued initial condition 
$v_t|_{t=0}=v_0$, where 
$$
\psi_t=N\int_0^t\beta_s\,ds,
\quad 
\beta_s
=\hat u^2+\bar u_s^2+\lambda^2_s+\tilde\lambda^2_s+\vartheta^2+\chi^2_s+\kappa_s
$$
with a constant  $N=N(d,r,\delta)$. In particular, if $u_0=v_0$ 
then the $\cH$-valued c\`adl\`ag modifications 
of $u$ and $v$ coincide almost surely on  $[0,T]$. 
\end{theorem}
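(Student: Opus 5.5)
The plan is to reduce Theorem \ref{theorem5} to Theorems \ref{theorem2} and \ref{theorem3}. Following the method of \cite{KG1980}, I will rewrite the Wiener integrals and the Poisson integral in \eqref{1.19.6} as a single stochastic integral against the cylindrical martingale $(m^k)_{k\in\bZ}$. For $k\geq1$ I take $m^k:=w^k$, with the same $\sigma^{ik}$, $h^k$, $\gamma^{ki}$ as in \eqref{SNS5}.

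To handle the jumps, I use that $\cZ$ is countably generated and $\frn$ is $\sigma$-finite, so $L_2(Z,\frn)$ is separable. I fix an orthonormal basis $(e_n)_{n\geq0}$ in it and define, for $k=-n\leq 0$,
$$
m^{k}_t:=\int_0^t\int_Z e_n(z)\,\tilde\pi(dz,ds).
$$
These are c\`adl\`ag square integrable martingales with $\langle m^i,m^j\rangle_t=t\delta_{ij}$, and they are orthogonal to the $w^k$. For $k=-n\le0$ I set
$$
h^k_t(x,u):=\int_Z \frh_t(x,u,z)e_n(z)\,\frn(dz),\qquad \sigma^{ik}:=0,\qquad \gamma^{ki}:=0.
$$
Parseval gives
$$
\sum_{k\leq0}|h^k_t(x,u)|^2=\int_Z|\frh_t(x,u,z)|^2\,\frn(dz).
$$
Hence Assumption \ref{assumption2c} yields \eqref{f_upperbound} and \eqref{lipschitz1} for the enlarged family $h=(h^k)_{k\in\bZ}$, with $\lambda$ replaced by $\lambda+\tilde\lambda$ and $\frF$ replaced by $\frF+\frH$. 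Since the new $\sigma^{ik}$ and $\gamma^{ki}$ vanish, Assumption \ref{assumption1} is unaffected.

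The key identity to verify is that, for $\varphi\in\cV$,
$$
\int_0^t\int_Z(\frh_s(u_{s-},z),\varphi)\,\tilde\pi(dz,ds)=\sum_{k\le0}\int_0^t(h^k_s(u_s),\varphi)\,dm^k_s .
$$
I would prove it for integrands that are finite sums of products of predictable processes with $e_n(z)$, then pass to the limit in $L_2$ using the isometry and Parseval, after localisation. Two points need care here.

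First, the left side uses $u_{s-}$ while \eqref{hl_sol} uses $u_s$. This is harmless because $\langle m^k\rangle$ is absolutely continuous, so replacing $u_{s-}$ by $u_s$ changes the integrand only on a $ds$-null set. Even so, the predictable version should be used when forming the integrals.

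Second, measurability: $h^k$ must be $\cO\otimes\cB$-measurable, which follows from Fubini.

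I also need to impose \eqref{pressure_assumption} for the new $k\le0$. Since $\gamma^{ki}=0$ there, the pressures $q^k$ for $k\leq0$ do not enter the drift and serve only to project the jump term onto $\cH$. I expect establishing this identity rigorously, in the weak ($\bP\otimes dt$-a.e.) sense of Definition \ref{definition2}, to be the main technical obstacle.

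With the identity in hand, every solution of \eqref{SNS5}--\eqref{bin5} is a solution of \eqref{SNS2}--\eqref{bin2} with the enlarged data, and admissibility is unchanged. Theorem \ref{theorem2} then gives the $\cH$-valued c\`adl\`ag modification. Theorem \ref{theorem3} gives the stability estimate with $\lambda_s^2$ replaced by $(\lambda_s+\tilde\lambda_s)^2\le2(\lambda_s^2+\tilde\lambda_s^2)$, which is absorbed into $\beta$ by enlarging $N$. Uniqueness for $u_0=v_0$ follows because $e^{-\psi_t}|u_t-v_t|^2_\cH=0$ a.s. for each $t$, and right-continuity of the c\`adl\`ag versions extends this to all $t$ simultaneously via rational times.
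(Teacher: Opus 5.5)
Your proposal is correct and follows the paper's proof essentially step for step. The Poisson integral is rewritten via an orthonormal basis of $L_2(Z,\frn)$ as a sum of integrals against martingales $m^k$, $k\le 0$; $\sigma$ is extended by zero; Parseval's identity verifies Assumption \ref{assumption2} with $\lambda+\tilde\lambda$ in place of $\lambda$; and Theorems \ref{theorem2} and \ref{theorem3} are then applied. Your version is in fact slightly more careful than the paper's: you make $\gamma^{ki}=0$ and the pressures $q^k$ for $k\le 0$ explicit, you use a scalar-valued basis, and you correctly replace $\frF$ (rather than $\frG$) by $\frF+\frH$.
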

\begin{proof}
We adapt the method of rewriting stochastic integrals 
with respect to martingale measures 
as stochastic integrals with respect to martingales with values 
in Hilbert spaces, see \cite{KG1980}. 
Let $(e_k)_{k\geq 1}$ be an orthonormal basis of the Hilbert space 
$L_2((Z,\cZ,\frn),\bR^d)$, the space of $\bR^d$-valued $\cZ$-measurable functions on $Z$ that are square-integrable with respect to $\frn(dz)$.
Define the sequence $(m^k)_{k\in\bZ}$ of real-valued $\cF_t$-martingales 
by 
$$
m^k_t=w^k_t\quad\text{for $k\geq1$,
\qquad  $m^k_t=\int_0^t\int_{Z}e_k(z)\,\tilde\pi(dz,ds)$\quad for $k\leq0$}. 
$$
Then the predictable quadratic variation processes satisfy 
$$
\langle m^k,m^l\rangle_t=\delta_{kl}t\quad\text{for $k,l\in\bZ$}. 
$$  
Next define for every $k\in\bZ$ the 
$\bR^d$-valued function $\tilde h^k=\tilde h^k_t(x,u)$ by setting 
$$
\tilde h^k_t(x,u)=h^k_t(x,u)\quad\text{for $k\geq1$}, 
$$
and, for $k\leq0$,  
$$
\tilde h^k_t(x,u)= \int_{Z} \frh_t(x, u, z)e_k(z)\,\frn(dz),  
$$
for $\omega\in\Omega$, $t\geq0$, $x\in\cR$ and $u\in\bR^d$. 
Then for every $\varphi\in\cH$ we have 
$$
\int_0^{t}\int_{Z}\big(\frh_s(u_s,z),\varphi\big)\,\tilde\pi(ds,dz)
=\sum_{k\leq0}\int_0^{t}\int_{Z}\big(\tilde h^k_s(u_s),\varphi\big)\,dm^{k}_s  
$$
and 
$$ 
\int_0^t\big(h^k_s(u_s),\varphi\big)\,dw^k_s
+\int_0^{t}\int_{Z}\big(\frh_s(u_s,z),\varphi\big)\,\tilde\pi(ds,dz)
$$
$$
=\sum_{k\in\bZ}\int_0^{t}\big(\tilde h^k_s(u_s),\varphi\big)\,dm^{k}_s
$$
 almost surely for all $t\in[0,T]$.  Moreover, 
 define $\tilde\sigma^{ik}=\sigma^{ik}$ 
 for $k\geq1$ and $\tilde\sigma^{ik}=0$ for $k\leq0$.  
 Then, for every $\varphi\in\cV$,  we have that  
 $$
\int_0^t\big(\sigma^{ik}_s D_iu_s+h^k_s(u_s),\varphi\big)\,dw^k_s
+\int_0^{t}\int_{Z}\big(\frh_s(u_s,z),\varphi\big)\,\tilde\pi(ds,dz)
$$
$$
=\sum_{k\in\bZ}
\int_0^{t}\big(\tilde\sigma^{ik}_sD_iu_s+\tilde h^k_s(u_s),\varphi\big)\,dm^{k}_s
$$
almost surely for all $t\in[0,T]$. Consequently, the stochastic Navier-Stokes equations 
\eqref{SNS5} can be rewritten in the form of \eqref{SNS2}, where 
the coefficient $\sigma$ and the function $h$ in \eqref{SNS2} are 
replaced by $\tilde\sigma$ and $\tilde h$, and the driving sequence of martingales $(m^k)_{k\in\bZ}$ is given as above. 
It remains to verify that $\tilde\sigma$ and $\tilde h$ satisfy the assumptions in Assumptions \ref{assumption1} and \ref{assumption2}, respectively, which were imposed on $\sigma$ and $h$.

It is straightforward to verify that the coercivity condition \eqref{coercivity} remains valid when $\sigma$ is replaced by $\tilde\sigma$. 
By Parseval's identity and Assumption \ref{assumption2c}, 
we have 
$$
 \sum_{k\leq0}|\tilde h^k_t(x,u)|_{\bR^d}^2=\int_{Z}|\frh_t(x,u,z)|^2_{\bR^d}\,\frn(dz)
 \leq \tilde\lambda_t^2|u|^2+|\frH_t(x)|^2
$$
for all $\omega$, $t\in[0,T]$, $x\in\cR$, and $u\in\bR^d$. Moreover, 
$$
\sum_{k\leq0}|\tilde h^k_s(x,u_1)-\tilde h^k_s(x,u_2)|^2_{\bR^d}
=\int_{Z}|\frh_s(x,u_1,z)-\frh_s(x,u_2,z)|^2\,\frn(dz)
$$
$$
\leq \tilde\lambda_t^2|u_1-u_2|^2 
$$
for all $\omega$, $t\in[0,T]$ $x\in\cR$, and $u_1, u_2\in\bR^d$.  
Hence, using \eqref{f_upperbound} and \eqref{lipschitz1} 
from  Assumption \ref{assumption2},  
we obtain 
$$
|\tilde h_t(x,u)|_{\ell_2(\bR^d)}
\leq  (\tilde\lambda_t+\lambda_t)|u|+\frH_t(x)+\frF_t(x)
$$
and 
$$
|\tilde h_t(x,u_1)-\tilde h_t(x,u_2)|_{\ell_2(\bR^d)}
\leq  (\tilde\lambda_t+\lambda_t)|u_1-u_2|. 
$$
This shows that, with $\tilde h$ in place of $h$,  
Assumption \ref{assumption2} is satisfied with 
$\tilde\lambda+\lambda$ replacing $\lambda$, and 
$\frH+\frG$ replacing $\frG$. This completes the proof 
of the theorem.  
\end{proof}

\end{document}